\pdfoutput=1

\documentclass[12pt,a4paper,dvipsnames,x11names]{amsart}

\DeclareMathAlphabet{\mathpzc}{OT1}{pzc}{m}{it}

\usepackage[utf8]{inputenc}
\usepackage[T1]{fontenc}
\usepackage[english]{babel}
\usepackage[a4paper,margin=1in]{geometry}
\usepackage{microtype}
\usepackage{lmodern}
\usepackage{mathpazo}
\usepackage{euler}
\usepackage{amsmath,amssymb,amsthm,mathtools,mathrsfs,bm,eucal,tensor}
\usepackage{stmaryrd}
\usepackage{dsfont}
\usepackage[all,cmtip]{xy}
\usepackage[shortlabels]{enumitem}
\usepackage{braket,xspace,csquotes}
\usepackage[dvipsnames,x11names]{xcolor}
\usepackage{tikz}
\usetikzlibrary{arrows.meta,calc,positioning,fit,backgrounds,shapes.geometric}
\usepackage{tikz-cd}
\usepackage{hyperref}
\hypersetup{
  colorlinks=true,
  linkcolor=blue!60!black,
  citecolor=blue!60!black,
  urlcolor=blue!60!black
}
\usepackage[capitalize,nameinlink]{cleveref}
\usepackage{comment}

\tikzset{
  >=Stealth,
  box/.style={draw, rounded corners, inner sep=6pt},
  v/.style={draw, circle, inner sep=1.4pt},
  arr/.style={->, thick},
  darr/.style={->, thick, dashed},
  flow/.style={draw=black!45, rounded corners=8pt, fill=gray!6, inner sep=8pt, align=center},
  flowaccent/.style={draw=black!50, rounded corners=8pt, fill=gray!10, inner sep=8pt, align=center},
  tinylabel/.style={font=\footnotesize, inner sep=2pt}
}

\numberwithin{equation}{section}

\theoremstyle{plain}
\newtheorem{theorem}{Theorem}[section]
\newtheorem{proposition}[theorem]{Proposition}
\newtheorem{lemma}[theorem]{Lemma}
\newtheorem{corollary}[theorem]{Corollary}

\theoremstyle{definition}
\newtheorem{definition}[theorem]{Definition}

\crefname{lemma}{Lemma}{Lemmas}
\Crefname{lemma}{Lemma}{Lemmas}
\crefname{proposition}{Proposition}{Propositions}
\Crefname{proposition}{Proposition}{Propositions}
\crefname{theorem}{Theorem}{Theorems}
\Crefname{theorem}{Theorem}{Theorems}
\crefname{corollary}{Corollary}{Corollaries}
\Crefname{corollary}{Corollary}{Corollaries}
\crefname{remark}{Remark}{Remarks}
\Crefname{remark}{Remark}{Remarks}
\crefname{section}{Section}{Sections}
\Crefname{section}{Section}{Sections}

\newcommand{\C}{\mathbb C}

\newcommand{\cO}{\mathcal{O}}
\newcommand{\cF}{\mathcal{F}}
\newcommand{\cS}{\mathcal{S}}
\newcommand{\cM}{\mathcal{M}}

\newcommand{\mfM}{\mathfrak{M}}
\newcommand{\mfX}{\mathfrak{X}}
\newcommand{\bos}{\mathrm{bos}}

\newcommand{\NS}{\mathrm{NS}}

\newcommand{\PP}{\mathbf P}

\DeclareMathOperator{\id}{id}
\DeclareMathOperator{\Aut}{Aut}

\DeclareMathOperator{\Spec}{Spec}

\DeclareMathOperator{\pr}{pr}

\DeclareMathOperator{\Sym}{Sym}

\newcommand{\barmfM}{\overline{\mfM}}
\newcommand{\barcS}{\overline{\cS}}
\newcommand{\Msm}{\mfM}
\newcommand{\Ssm}{\cS}
\newcommand{\cHom}{\mathcal Hom}
\newcommand{\eps}{\varepsilon}
\newcommand{\Tr}{\operatorname{Tr}}

\title[Compactified supermoduli]{Compactified Supermoduli Space is Almost Never Projected}

\author{Mauricio Corr\^ea}
\address{Dipartimento di Matematica, Universit\`a degli Studi di Bari Aldo Moro, Bari, Italy}
\email{mauricio.barros@uniba.it}

\author{Simone Noja}
\address{Dipartimento di Matematica, Universit\`a degli Studi di Bari Aldo Moro, Bari, Italy}
\email{simone.noja@uniba.it}

\begin{document}

\begin{abstract}
We settle the projectedness problem for the compactified unpunctured
supermoduli stack in every genus at least two. In genus two, the odd component is split, whereas
the even component is non-projected. In every genus \(g\geq 3\), both compactified parity
components are non-projected.
\end{abstract}

\subjclass[2020]{Primary 14M30; Secondary 14H10, 14D23, 32C11, 58A50.}

\keywords{Supermoduli space; stable supercurves; spin curves; projectedness; splitting obstruction; Deligne--Mumford superstacks.}

\maketitle

\section{Introduction}

A super Riemann surface is a complex supermanifold built over an ordinary
Riemann surface and equipped with a distinguished odd direction in its tangent
bundle \cite{WittenNotes}.  This odd direction is not integrable in the usual sense: its square,
via the Lie bracket, generates the even tangent direction.  This is the
geometric incarnation of worldsheet supersymmetry.  
Over a point, such an object is equivalently an ordinary Riemann surface together with a theta
characteristic.  Accordingly, the moduli stack of super Riemann surfaces is a
supergeometric enhancement of the moduli stack of spin curves.

These objects are central in perturbative superstring theory.  Super Riemann
surfaces are the worldsheets of the theory, and their moduli superstacks are the
natural domains of integration for superstring measures.  
A basic structural question is whether supermoduli space admits a holomorphic
projection to its bosonic truncation.  If such a projection exists, then the odd
and even directions can be separated, at least formally, and integration over
supermoduli can be reduced to geometry on the moduli stack of spin curves.  The
work of Donagi--Witten showed that this expectation fails in general for the
open supermoduli spaces \cite{DWnp,DWat}.  For questions involving amplitudes,
degenerations, and the boundary behaviour of the superstring measure, however,
the natural objects are the Deligne--Mumford compactifications by stable
supercurves \cite{WittenNotes,FKP}.  The projectedness problem for these
compactified superstacks is therefore a more global form of the same
question.

The purpose of this paper is to determine projectedness for the compactified
unpunctured supermoduli stack in every genus \(g\geq2\) and in both parity
components.  The result is complete: the only projected compactified component
in genus at least two is the odd genus-two component, which is in fact split.
All other compactified components are non-projected, already at the level of the
primary Green obstruction.

\begin{theorem}[Main theorem]\label{thm:main}
Let \(\overline{\mathfrak M}_g^\pm\) denote the two parity components of the Deligne--Mumford
compactification of the moduli superstack of unpunctured super Riemann surfaces.
Then the following hold.
\begin{enumerate}
\item The odd compactified genus-two component \(\overline{\mathfrak M}_2^-\) is split.
\item The even compactified genus-two component \(\overline{\mathfrak M}_2^+\) is non-projected. More precisely,
\[
\omega_2(\overline{\mathfrak M}_2^+)\neq 0.
\]
\item For every \(g\geq 3\), both compactified parity components
\(\overline{\mathfrak M}_g^+\) and \(\overline{\mathfrak M}_g^-\) are non-projected. More precisely,
for every \(g\geq 3\),
\[
\omega_2(\overline{\mathfrak M}_g^\pm)\neq 0.
\]
\end{enumerate}
\end{theorem}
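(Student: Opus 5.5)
The plan is to study the first (primary) obstruction to splitting. For a superstack \(\mfX\) with bosonic truncation \(X\), odd cotangent bundle \(E\) and tangent sheaf \(T_X\), this is the Green class
\[
\omega_2(\mfX)\in H^1\bigl(X, T_X\otimes \Sym^2 E\bigr).
\]
For compactified supermoduli we have \(X=\overline{\cS}_g^\pm\), the stack of stable spin curves. Deformation theory of stable supercurves identifies \(E^\vee\) with \(\pi_*(\omega_\pi^{\log}\otimes \mathcal L)\), i.e.\ with sections of the spin structure twisted along nodes. Since \(\wedge^2 E\) is the relevant even piece, the class lives in \(H^1(\overline{\cS}_g, T\otimes \wedge^2 E)\), with the parity conventions of super-symmetric powers. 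I will first write \(\omega_2\) for the compactification concretely. By functoriality of Green's class under restriction, it restricts to the Donagi--Witten class on the open part \(\cS_g^\pm\), and it also restricts to each boundary stratum.

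\textbf{Step 1: genus \(g\ge3\) and even genus two.} I will not compute \(\omega_2\) globally. Instead I pull it back along a map \(\iota\colon \mathcal Y\to \overline{\cS}_g^\pm\), where \(\mathcal Y\) is a closed substack on which the supermoduli restricts to a superstack whose Green class is already known to be nonzero. Naturality gives \(\iota^*\omega_2(\overline{\mfM}_g)=\omega_2(\iota^*\overline{\mfM}_g)\), so nonvanishing on \(\mathcal Y\) implies nonvanishing globally.

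The natural choice of \(\mathcal Y\) follows Donagi--Witten. We attach to a fixed curve a varying elliptic or genus-two tail, through a boundary gluing map such as \(\overline{\cS}_{g-1,1}\times\overline{\cS}_{1,1}\to\overline{\cS}_g\) with Neveu--Schwarz or Ramond nodes. Alternatively, for \(g\ge3\) one can restrict to the open part and quote Donagi--Witten nonprojectedness of \(\mfM_g^\pm\). For that route I must check that their argument, which uses a family over a curve in \(\cS_g\), covers both parities; if it does not, I will add a boundary family to handle the missing parity.

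Open nonprojectedness implies compactified nonprojectedness directly, since a projection would restrict. So for \(g\ge5\) the statement follows from Donagi--Witten. The remaining cases \(g=2,3,4\), and any parity not covered, require a boundary computation. For these I will use the gluing decomposition: near a boundary divisor, \(E\) splits as the product of odd moduli of the two sides, plus possibly one odd gluing parameter at a Ramond node. The Green class then contains a cross term: a cup product of the Kodaira--Spencer class of one factor with the odd parameter of the other, valued in the normal direction of the divisor. I will show this term is a nonzero multiple of the class \(\delta\) of the boundary divisor paired with odd gluing data, which is nonzero in \(H^1\) because \(\delta\) is nontrivial on a test curve.

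\textbf{Step 2: the odd genus-two component.} Here the open odd spin curves in genus two have \(h^0(\mathcal L)=1\), and \(\overline{\mfM}_2^-\) should have odd dimension \(2g-2=2\) (so \(E\) has rank 2). I will show \(H^1(\overline{\cS}_2^-, T\otimes\wedge^2E)=0\), for instance by identifying \(\overline{\cS}_2^-\) with the quotient of \(\overline{\cM}_{2}\)-type data by the Weierstrass point, i.e.\ with a rational stack. This kills \(\omega_2\). Odd dimension 2 means there are no higher obstructions, since \(\wedge^3E=0\). To upgrade "projected" to "split" I then need \(E\) to be the actual odd cotangent bundle of a product, which is automatic once the first-order obstruction vanishes in odd rank 2.

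\textbf{Main obstacle.} I expect the hardest part to be the low-genus non-vanishing for \(\overline{\mfM}_2^+\) and \(g=3,4\). Here the open results do not suffice, and one must do an explicit boundary degeneration computation showing that the cross term in \(\omega_2\) survives in cohomology. I will try first a test curve through a Ramond-node boundary divisor, computing the class as \(c_1\) of the normal bundle tensored with odd data.
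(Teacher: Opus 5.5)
\textbf{Non-vanishing for $g\ge3$ and for $\barmfM_2^+$.} Your $g\ge5$ shortcut is valid: a projection, and $\omega_2$, restrict to the open substack, so the Donagi--Witten open result suffices, provided it covers the parity in question. The rest of Step~1 has a real gap.

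Your identity $\iota^*\omega_2(\barmfM_g)=\omega_2(\iota^*\barmfM_g)$ is not what functoriality gives. For $f:\mathfrak Y\to\mathfrak X$, the class $\omega_2(\mathfrak Y)$ lives in $H^1(Y,\cHom(\wedge^2T_-\mathfrak Y,T_+\mathfrak Y))$. \Cref{lem:functoriality} only equates its image under the map $j$ induced by $T_+\mathfrak Y\to f_0^*T_+\mathfrak X$ with the image of $\omega_2(\mathfrak X)$. The kernel of $j$ is the image of $H^0(Y,\cHom(\wedge^2T_-\mathfrak Y,Q))$, where $Q$ is the normal quotient, and bounding this kernel is the heart of the proof.

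The paper takes the Donagi--Witten supermanifold $X_\eta$ over a genus-$h=g-1$ curve with ineffective even $L$, for which $\omega_2(X_\eta)=\eta\in H^1(C,L^{-1})$. It maps $X_\eta$ into $\barmfM_g^\eps$ by NS-gluing a \emph{fixed} elliptic tail of parity $\eps$ at the moving point. Then $Q$ is an extension of $L^{-1}\otimes M_q^{-1}$ by a trivial bundle. Using $H^0(C,L)=0$, the kernel of $j$ on $H^1(C,L^{-1})$ has dimension at most $h^0(Q\otimes L)\le1<2h-2=h^1(C,L^{-1})$ (\Cref{lem:kernel}, \Cref{cor:eta-survives}). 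This covers $g=3,4$ and both parities at once.

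Your ``cross term equal to $\delta$ paired with odd gluing data'' cannot replace this. First, $c_1$ of a normal bundle lies in $H^1(\Omega^1)$, not in $H^1(T\otimes\wedge^2E)$. Second, a local expression near a divisor does not show a class is non-zero in $H^1$ unless you also show no global correction removes it. That global input is decisive for $\barmfM_2^+$, whose interior is projected: FKP's canonical projection even extends over $X\setminus D_{-,-}$.

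Your cross-term picture is close to the paper's local input. That input is the principal part $-\tfrac{(2\pi i)^{-2}}{2t^2}\partial_t\otimes\eta_1\eta_2$ along the separating NS divisor $D_{-,-}$, read off from FKP's plumbing expansion (\Cref{lem:g2-even-principal-part}). But the proof also needs \Cref{lem:g2-even-no-vector-field}. This is a hyperelliptic invariant-theory argument (scaling weights, $x\mapsto 1/x$, swapping the two triples of branch points) showing that $\mathcal V$ has no non-zero section on $X\setminus D_{-,-}$ regular at the non-separating divisors. Hence no global projection could absorb the pole.

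\textbf{Odd genus two.} Rationality of $\barcS_2^-$ does not give $H^1(\barcS_2^-,T\otimes\wedge^2E)=0$. Rationality controls birational invariants such as $H^i(\cO)$, not $H^1$ of a twisted tangent bundle on a compactified stack. So Step~2 has no mechanism for killing $\omega_2$.

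The missing idea is an automorphism present at every point that acts on the coefficient bundle by a non-trivial character. The canonical genus-two involution lifts compatibly to every stable odd spin curve. The \emph{smoothable} lifts (sign choices constant on the non-exceptional components of Cornalba's quasistable model) form a finite \'etale double cover $p:\widehat{\cS}_2^-\to\barcS_2^-$. Over this cover, $(\iota,\gamma)$ is a bosonic inertia section.

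This section acts trivially on $p^*T$, because $\iota$ acts by $-1$ on $H^0(K_C)$ and $\Sym^2H^0(K_C)\simeq H^0(K_C^{\otimes2})$. It acts by $(-\lambda)^2=\lambda^2=-1$ on $\det E^\vee=\det H^0(K_C\otimes L)$, where compatibility with the spin isomorphism forces $\lambda^2=-1$ (\Cref{prop:character}). Hence $p^*\omega_2=-p^*\omega_2$, and the trace splitting for finite \'etale maps gives $\omega_2=0$.

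The same argument annihilates every class in $H^1(\barcS_2^-,T\otimes\wedge^2E)$. So the vanishing you aim for is true, but because of the inertia character, not because of rationality.
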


The second item was first proved by Felder--Kazhdan--Polishchuk in
\cite{FKP-boundary}.  Here we confirm their result by computing directly the primary Green obstruction
at the separating \((-,-)\)-boundary divisor. Our argument is not independent of \cite{FKP-boundary} though,  
as it relies on their boundary
expansion of the superperiod map and the elementary hyperelliptic
invariant-theoretic vanishing entering their analysis of projections. 

The proof of \Cref{thm:main} has three distinct parts.  In genus two and odd parity, the canonical
genus-two involution acts on the obstruction sheaf by a nontrivial character;
after passing to a finite \'etale cover, this forces the primary obstruction to
vanish and gives splitness.  In genus two and even parity, we instead compute
the normal principal part of the primary obstruction along the separating
\((-,-)\)-boundary divisor and show that it cannot be removed.  In all genera
\(g\geq3\), we propagate the Donagi--Witten one-pointed obstruction to the
compactified unpunctured moduli space by means of the elliptic-tail
Neveu--Schwarz boundary gluing map; the obstruction has a nonzero normal
component along this boundary and therefore survives in the ambient
compactified supermoduli stack.

The paper is organized as follows.  In \Cref{sec:primary-stack} we recall the
primary Green obstruction for Deligne--Mumford superstacks and establish the
finite \'etale and inertia criteria used later.  In \Cref{sec:g2-compactified}
we prove that the odd compactified genus-two component is split.  In
\Cref{sec:g2-even} we prove the non-projectedness of the even compactified
genus-two component by the boundary obstruction computation described above.
In \Cref{sec:DW-family} we recall the Donagi--Witten one-pointed obstruction
family.  In \Cref{sec:elliptic-tail} we construct the elliptic-tail
Neveu--Schwarz boundary map and compute the relevant tangent and normal
directions.  In \Cref{sec:survival} we prove that the one-pointed obstruction
survives under this gluing.  Finally, \Cref{sec:proof-main} assembles these
ingredients and proves \Cref{thm:main}.

\section{The primary obstruction on Deligne--Mumford superstacks}\label{sec:primary-stack}

We recall the form of Green's primary obstruction \cite{Green} used below.  The class is
obtained by the standard \'etale-local construction on smooth superschemes and
then descended along the associated \'etale groupoid.  For the obstruction theory
of supermanifolds and supermoduli we use Green's obstruction theory in the form
recalled by Donagi--Witten \cite{DWnp,DWat}. For more on obstruction theory, see also the recent \cite{CorNoj}. 
An account on superstacks is available in \cite{Bruzzo, CodViv}. 

\subsection{Splittings, projections, and the primary class}

Let \(\mfX\) be a smooth Deligne\allowbreak--Mumford superstack over \(\C\).  Set
\[
   X:=\mfX_{\bos}
\]
for its bosonic truncation.  Let \(J\subset\cO_{\mfX}\) be the ideal locally
generated by odd functions.  The odd conormal, or fermionic, sheaf is
\[
   \cF_{\mfX}:=J/J^2.
\]
Modulo the odd ideal, the tangent bundle $T \mfX$ decomposes as
\[
   T\mfX/JT\mfX=T_+\mfX\oplus T_-\mfX,
   \qquad
   T_-\mfX=\cF_{\mfX}^{\vee},
   \qquad
   T_+\mfX=T_X.
\]
Thus \(T_+\mfX\) and \(T_-\mfX\) are vector bundles on the ordinary
Deligne--Mumford stack \(X\).

\begin{definition}
A \emph{projection} of \(\mfX\) is a morphism of superstacks
\[
   \pi:\mfX\longrightarrow X
\]
which is a left inverse to the canonical closed embedding \(X\hookrightarrow\mfX\).
A \emph{splitting} of \(\mfX\) is an isomorphism of filtered superalgebras on
\(X\)
\[
   \cO_{\mfX}\simeq \bigwedge\nolimits^\bullet \cF_{\mfX}
\]
compatible with the augmentation to \(\cO_X\).
\end{definition}

A splitting induces a projection.  Conversely, in odd rank two, projectedness
and splitting are equivalent by Green's obstruction theory: there are no higher
Green obstructions beyond the primary obstruction, see for example \cite{CorNoj, DWnp}.

For clarity, we recall the local meaning of the coefficient sheaf.  On a split
chart, a change of local projection at second order is given by an even
derivation of \(\cO_X\) with values in \(\bigwedge^2\cF_{\mfX}\).  Equivalently,
it is a section of
\[
   T_X\otimes \bigwedge\nolimits^2T_-^\vee\mfX .
\]
Concretely, choose an \'etale cover \(\{U_i\}\) over which the first
infinitesimal neighbourhood is identified with the split model.  Local
second-order projections differ on overlaps by automorphisms of the form
\[
   1+q_{ij}
   \qquad
   q_{ij}\in\Gamma\bigl(U_{ij},
   T_X\otimes\wedge^2T_-^\vee\mfX\bigr).
\]
Modulo the third power of the odd ideal, the cocycle condition becomes the
additive identity
\[
   q_{ij}+q_{jk}=q_{ik}
   \qquad\text{on }U_{ijk},
\]
while changing the local second-order projections replaces \(q_{ij}\) by
\(q_{ij}+a_j-a_i\).  Thus the primary obstruction is the resulting \v{C}ech class
measuring the failure of local projections to glue at this order.

\begin{proposition}\label{prop:omega-stack}
Let \(\mfX\) be a smooth Deligne--Mumford superstack over \(\C\), and let
\(X=\mfX_{\bos}\).  There is a canonical primary obstruction class
\[
   \omega_2(\mfX)
   \in
   H^1\left(X,T_X\otimes\bigwedge\nolimits^2T_-^\vee\mfX\right).
\]
It is compatible with \'etale pullback.  If \(\omega_2(\mfX)\ne0\), then
\(\mfX\) is not projected.  If \(T_-\mfX\) has rank two, then
\[
   \mfX\text{ is split}\qquad\Longleftrightarrow\qquad\omega_2(\mfX)=0.
\]
\end{proposition}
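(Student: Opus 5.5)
We construct $\omega_2$ \'etale-locally on superscheme charts and descend it along the \'etale groupoid of an atlas. Choose an \'etale atlas $U\to\mfX$ by a smooth superscheme, with groupoid $R=U\times_{\mfX}U\rightrightarrows U$, and write $U_{\bos},R_{\bos}$ for the bosonic truncations; these form an \'etale atlas of $X$. On a smooth superscheme $U$, Green's theory applies directly. The sheaf $G^{(2)}$ of automorphisms of $\cO_U/J^3$ inducing the identity on $\cO_U/J^2$ and on $\mathrm{gr}$ is abelian and identified with $T_{U_{\bos}}\otimes\wedge^2T_-^\vee$ via $1+q$. By smoothness, $\cO_U/J^3$ is \'etale-locally isomorphic to the split model $(\wedge^\bullet\cF)/(\wedge^{\ge3})$, compatibly with a fixed identification on $\cO/J^2$. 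This local existence is the standard fact that smooth supermanifolds are locally split; in \'etale topology it follows from formal smoothness, since a lift $\cO_{U_{\bos}}\to\cO_U/J^3$ exists locally because $\Omega_{U_{\bos}}$ is locally free. Comparing two local trivializations on overlaps gives the cocycle $q_{ij}$ recalled above, and changing trivializations changes it by a coboundary. This yields $\omega_2(U)\in H^1(U_{\bos},T\otimes\wedge^2T_-^\vee)$, independent of choices. Equivalently, $\omega_2(U)$ is the class of the torsor of splittings of $\cO_U/J^3$ under $G^{(2)}$.

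For the stack, I would phrase everything via this torsor so that descent is formal. The sheaf $\mathcal P$ of local second-order splittings (equivalently, of local projections modulo $J^3$) is a $G^{(2)}$-torsor on the small \'etale site of $U$. Its formation is canonical, hence commutes with \'etale base change: for an \'etale $f:V\to U$ of superschemes we have $\mathcal P_V=f^{-1}\mathcal P_U$ and $G^{(2)}_V=f^*G^{(2)}_U$, because $J_V=f^*J_U$ and $T_{V_{\bos}}=f^*T_{U_{\bos}}$ for \'etale maps. So $\mathcal P$ and $G^{(2)}$ are cartesian sheaves on the \'etale site of $\mfX$, i.e.\ sheaves on $X_{\mathrm{et}}$, using that the \'etale sites of $\mfX$ and $X$ agree. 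Thus we get a $T_X\otimes\wedge^2T_-^\vee\mfX$-torsor on $X$, whose class is $\omega_2(\mfX)\in H^1(X,-)$. Compatibility with \'etale pullback $\mfX'\to\mfX$ is the same base-change identity $\mathcal P_{\mfX'}=f^{-1}\mathcal P_{\mfX}$. The main point to check carefully is this cartesianness: the identification $G^{(2)}\cong T\otimes\wedge^2T_-^\vee$ must be canonical, i.e.\ independent of chart and equivariant for the groupoid. It is, because the map $1+q\mapsto q$ involves only the grading of $\mathrm{gr}\,\cO$, which is intrinsic.

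Next, the non-projectedness implication. A projection $\pi:\mfX\to X$ gives a map $\pi^\sharp:\cO_X\to\cO_{\mfX}$ splitting the augmentation. Together with a choice of lift of $\cF$ (locally possible), this yields a global section of $\mathcal P$ modulo $J^3$. More directly, $\pi^\sharp$ reduced mod $J^3$ is a global element of the torsor of second-order projections, which is a torsor under the same group because the lift of $\cF$ at order $J^3$ is automatic in degree one. Hence the torsor is trivial and $\omega_2=0$. So $\omega_2\neq0$ forces non-projectedness.

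Finally, the rank-two equivalence. If $\mfX$ is split, it is projected, so $\omega_2=0$. Conversely, if $\operatorname{rk}T_-=2$, then $J^3=0$, so $\cO_{\mfX}=\cO_{\mfX}/J^3$. A global section of $\mathcal P$, which exists when $\omega_2=0$, is then a global isomorphism $\cO_{\mfX}\simeq\wedge^\bullet\cF$. This is exactly the statement that no higher Green obstructions exist, cited from \cite{CorNoj,DWnp}; on the stack it follows because the isomorphism is a global section of a cartesian sheaf and so descends.
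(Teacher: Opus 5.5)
Your proof is correct. It follows the same skeleton as the paper's: Green's construction on an \'etale atlas, descent to $X$, then the obstruction-theoretic criteria. It differs from the paper in two places.

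\textbf{Descent.} The paper descends the \emph{class}. It notes that the two pullbacks of the Green class to $R_{\bos}$ agree and calls this a descent datum. Taken literally, that is not enough: classes in $H^1$ do not form a stack, so equal pullbacks neither produce nor single out a class on $X$. You instead descend the torsor $\mathcal P$ of second-order splittings (equivalently, second-order projections) under $T_X\otimes\wedge^2T_-^\vee\mfX$. This is legitimate because both the torsor and the identification of its structure group are intrinsic:
\begin{itemize}
\item an automorphism of $\cO/J^3$ that is trivial modulo $J^2$ is $1+D$, with $D$ a derivation killing $J$; hence $D$ is canonically a derivation $\cO_X\to\wedge^2\cF$;
\item $(\cO/J^3)_{\mathrm{odd}}=\cF$ canonically, because $J^2/J^3$ is purely even.
\end{itemize}
So $\mathcal P$ is a cartesian sheaf on the common \'etale site of $\mfX$ and $X$. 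Its class in $H^1(X,T_X\otimes\wedge^2T_-^\vee\mfX)$ is therefore well defined and natural under \'etale pullback.

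\textbf{The two criteria.} The paper cites Donagi--Witten's Corollaries 2.3--2.4 for both implications; you prove them directly.
\begin{itemize}
\item A projection $\pi$ gives $\pi^\sharp:\cO_X\to\cO_{\mfX}$. Its reduction modulo $J^3$ is a global section of $\mathcal P$, so $\omega_2=0$.
\item In odd rank two $J^3=0$, so a global section of $\mathcal P$, which exists once the torsor class vanishes, is already a splitting $\cO_{\mfX}\simeq\wedge^\bullet\cF$.
\end{itemize}
The paper's route buys brevity; yours is self-contained and makes the stack-theoretic descent rigorous.

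\textbf{One expository slip.} Remove the sentence saying that ``a choice of lift of $\cF$ (locally possible)'' yields a global section of $\mathcal P$. Local lifts only give local sections. The correct reason is the one you give immediately afterwards: at order $J^3$ the odd part is canonically $\cF$, so no choice is involved.
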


\begin{proof}
Choose an \'etale atlas \(U\to\mfX\) by a smooth superscheme, and let
\[
   R:=U\times_{\mfX}U
\]
be the associated \'etale groupoid.  Applying Green's construction to \(U\) gives
a \v{C}ech representative of the primary obstruction on \(U_{\bos}\), with
coefficients in
\[
   T_{U_{\bos}}\otimes\bigwedge\nolimits^2T_{-,U}^\vee.
\]
The construction is functorial under \'etale morphisms.  Therefore the two
pullbacks of this class to \(R_{\bos}\) agree.  Equivalently, the local Green
classes define a descent datum for the \'etale groupoid
\[
   R_{\bos}\rightrightarrows U_{\bos}.
\]
Thus the obstruction is first obtained on the atlas and is then descended through
\[
\begin{tikzcd}[column sep=large]
   R_{\bos} \arrow[r,shift left=0.65ex,"s"] \arrow[r,shift right=0.65ex,"t"']
   & U_{\bos} \arrow[r] & X .
\end{tikzcd}
\]
The descended class lies in
\[
   H^1\left(X,T_X\otimes\bigwedge\nolimits^2T_-^\vee\mfX\right).
\]
The same functoriality gives compatibility with \'etale pullback.

The criterion that a non-zero primary obstruction prevents projectedness is
Green's obstruction-theoretic criterion, in the form recalled by
Donagi--Witten \cite[Corollaries 2.3--2.4]{DWnp}.  Since projectedness and
splitting are \'etale-local on the bosonic truncation and compatible with descent
along the \'etale groupoid, the same criterion holds for smooth
Deligne--Mumford superstacks.

Finally, when \(T_-\mfX\) has rank two, there are no odd exterior powers beyond
degree two and hence no higher Green obstructions.  Thus vanishing of the
primary obstruction is equivalent to projectedness, and in odd rank two
projectedness is equivalent to splitting.  This is the rank-two case of Green's
obstruction theory, again in the form recalled in \cite[Corollaries 2.3--2.4]{DWnp}.
\end{proof}

\subsection{Finite \'etale injectivity and an inertia criterion}

\begin{lemma}\label{lem:finite-etale-injective}
Let \(p:Y\to X\) be a finite \'etale surjective morphism of Deligne--Mumford
stacks over \(\C\), of constant degree \(d>0\).  Let \(\mathcal V\) be a vector
bundle on \(X\).  Then
\[
   p^*:H^i(X,\mathcal V)\longrightarrow H^i(Y,p^*\mathcal V)
\]
is injective for every \(i\ge0\).
\end{lemma}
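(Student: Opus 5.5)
The plan is the standard trace argument. Since \(p\) is finite, \(p_*\) is exact on quasi-coherent sheaves, so \(H^i(Y,p^*\mathcal V)\cong H^i(X,p_*p^*\mathcal V)\). By the projection formula, \(p_*p^*\mathcal V\cong \mathcal V\otimes p_*\cO_Y\). Under these identifications, \(p^*\) on cohomology is the map induced by the unit \(\mathcal V\to \mathcal V\otimes p_*\cO_Y\), \(v\mapsto v\otimes 1\). It therefore suffices to exhibit a retraction of this sheaf map; the induced map on \(H^i\) is then a left inverse of \(p^*\).

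For the retraction, use the trace. Because \(p\) is finite étale of constant degree \(d\), the sheaf \(p_*\cO_Y\) is a locally free \(\cO_X\)-algebra of rank \(d\). It carries a trace map \(\Tr:p_*\cO_Y\to\cO_X\), defined étale-locally as the trace of multiplication. Étale-locally on \(X\) the cover is trivial, \(Y\times_XU\cong\coprod_{k=1}^d U\), and there \(\Tr\) is the sum of the \(d\) coordinates. In particular \(\Tr(1)=d\). Set
\[
   r:=\tfrac1d(\id_{\mathcal V}\otimes\Tr):\ \mathcal V\otimes p_*\cO_Y\to\mathcal V .
\]
Then \(r\circ(\text{unit})=\id_{\mathcal V}\), and the division by \(d\) is legitimate since we work over \(\C\). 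Applying \(H^i(X,-)\) gives \(H^i(r)\circ p^*=\id\), which proves injectivity.

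The only step needing care is making this rigorous on Deligne--Mumford stacks rather than schemes. I would handle it in three parts.

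\begin{enumerate}
\item The trace is canonical, so it commutes with étale base change. It therefore descends along an étale atlas \(U\to X\) and defines a morphism of quasi-coherent sheaves on the lisse-étale (or small étale) site of \(X\).
\item Pushforward along the representable finite morphism \(p\) is exact on quasi-coherent sheaves, which one checks on an atlas where \(p\) becomes a finite morphism of schemes. Hence the Leray spectral sequence degenerates and gives \(H^i(Y,p^*\mathcal V)=H^i(X,p_*p^*\mathcal V)\).
\item The identification of \(p^*\) with the unit map is a formal adjunction statement.
\end{enumerate}

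Everything else is sheaf-theoretic bookkeeping. I expect no genuine obstacle beyond noting that constant degree is needed so that \(\tfrac1d\) is a global constant, together with characteristic zero.
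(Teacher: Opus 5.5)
Your proposal is correct and uses the same trace argument as the paper: the composite $\mathcal V\to p_*p^*\mathcal V\xrightarrow{\Tr}\mathcal V$ is multiplication by $d$, so $\mathcal V$ is a direct summand of $p_*p^*\mathcal V$ and $p^*$ is injective on cohomology. You also make explicit a step the paper uses tacitly, namely that $H^i(Y,p^*\mathcal V)\cong H^i(X,p_*p^*\mathcal V)$ because $p$ is representable and finite, hence affine.
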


\begin{proof}
Since \(p\) is finite \'etale of degree \(d\), there is a trace morphism
\[
   \Tr:p_*\cO_Y\longrightarrow\cO_X
\]
such that the composite
\[
   \cO_X\longrightarrow p_*\cO_Y\xrightarrow{\Tr}\cO_X
\]
is multiplication by \(d\).  Tensoring with \(\mathcal V\) and using the
projection formula gives a composite
\[
   \mathcal V\longrightarrow p_*p^*\mathcal V\xrightarrow{\Tr}\mathcal V
\]
which is multiplication by \(d\).  Since \(d\) is invertible over \(\C\),
\(\mathcal V\) is a direct summand of \(p_*p^*\mathcal V\).  Equivalently, one
has the split diagram
\[
\begin{tikzcd}[column sep=large]
   \mathcal V \arrow[r] \arrow[dr, "d\,\id_{\mathcal V}"']
   & p_*p^*\mathcal V \arrow[d, "\Tr"] \\
   & \mathcal V .
\end{tikzcd}
\]
Passing to cohomology gives the desired injectivity.
\end{proof}

\bigskip

\begin{definition}\label{def:inertia-stack}
Let \(\mfX\) be a Deligne--Mumford superstack, and let
\(
   \jmath:X:=\mfX_{\bos}\hookrightarrow \mfX
\)
be its bosonic truncation.  Here \(X=\mfX_{\bos}\) is the ordinary
Deligne--Mumford stack obtained, on an \'etale atlas, by quotienting the
structure sheaf by the ideal generated by odd sections.

The inertia stack of \(\mfX\) is
\[
   I_{\mfX}:=\mfX\times_{\mfX\times\mfX}\mfX .
\]
We define the \emph{bosonic inertia of \(\mfX\) along \(X\)} to be the ordinary
stack
\[
   I_{\mfX}^{\bos}\longrightarrow X
\]
whose \(T\)-points, for a bosonic test stack \(T\xrightarrow{f}X\), are
\[
   I_{\mfX}^{\bos}(T)
   =
   \Aut_{\mfX(T)}(\jmath\circ f).
\]
Equivalently,
\[
   I_{\mfX}^{\bos}
   \simeq
   \left(I_{\mfX}\times_{\mfX}X\right)_{\bos},
\]
where \(I_{\mfX}\to\mfX\) is the natural projection and the fibre product uses
\(\jmath:X\to\mfX\).

If \(p:Y\to X\) is a morphism from a bosonic Deligne--Mumford stack, a
\emph{bosonic inertia section over \(Y\)} is a section
\[
   a:Y\longrightarrow I_{\mfX}^{\bos}\times_XY .
\]
Thus \(a\) is a functorial automorphism, over the identity of \(Y\), of the
object of \(\mfX(Y)\) obtained from
\[
   Y\xrightarrow{p}X\xrightarrow{\jmath}\mfX .
\]

If \(\mfX\) is smooth, such an automorphism acts by its differential on the even
and odd tangent bundles of \(\mfX\) along \(X\), that is on $T_\pm \mfX$. Hence it acts on \(p^*T_X\), on
\(p^*T_-\mfX\), and on every vector bundle functorially constructed from them.
\end{definition}

Set for convenience
\(
   \mathcal V_{\mfX}:=T_X\otimes\bigwedge\nolimits^2T_-^\vee\mfX .
\)

\begin{lemma}[Inertia-character criterion]\label{lem:inertia-character}
Let \(\mfX\) be a smooth Deligne--Mumford superstack over \(\C\), and let
\(X=\mfX_{\bos}\).  Let \(p:Y\to X\) be finite \'etale and surjective.  Suppose
that \(Y\) carries a bosonic inertia section
\[
   a:Y\longrightarrow I_{\mfX}^{\bos}\times_XY
\]
whose induced action on \(p^*\mathcal V_{\mfX}\) is multiplication by a scalar
\(\chi\in\C^*\) with \(\chi\ne1\).  Then
\[
   \omega_2(\mfX)=0.
\]
\end{lemma}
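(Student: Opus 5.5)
By \Cref{lem:finite-etale-injective} (applied to the connected components of \(Y\), or to \(Y\to X\) directly if of constant degree), it suffices to prove that \(p^*\omega_2(\mfX)=0\) in \(H^1(Y,p^*\mathcal V_{\mfX})\). By the étale compatibility in \Cref{prop:omega-stack}, this pulled-back class is the primary obstruction of the base-changed superstack \(\mfX_Y:=\mfX\times_X Y\) (or the pullback of the Green class along \(Y\to X\to\mfX\)), so it suffices to understand how the inertia section \(a\) acts on it.

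The key step is naturality. The automorphism \(a\) of the object \(\jmath\circ p\in\mfX(Y)\) is an automorphism over the identity of \(Y\), so it induces an automorphism of the pulled-back super-neighbourhood and hence of all of Green's data on \(Y\). Because Green's construction is canonical, its obstruction class is invariant under any automorphism of the superstack that restricts to the identity on the bosonic base. On the other hand, \(a\) acts on the coefficient sheaf \(p^*\mathcal V_{\mfX}=p^*T_X\otimes\bigwedge^2p^*T_-^\vee\mfX\) through its differential. Concretely, on the Čech representative \(q_{ij}\) built from local second-order projections, transporting the projections by \(a\) replaces \(q_{ij}\) by \(a\cdot q_{ij}\). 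Here \(a\) induces the identity on \(Y\), so the base Čech cover is unchanged, and the induced action on \(H^1(Y,p^*\mathcal V_{\mfX})\) is the map induced by the sheaf automorphism \(\chi\cdot\id\). Canonicity then gives
\[
   p^*\omega_2(\mfX)=a_*\,p^*\omega_2(\mfX)=\chi\, p^*\omega_2(\mfX).
\]
Since \(\chi\neq1\), we get \((1-\chi)p^*\omega_2(\mfX)=0\), so \(p^*\omega_2(\mfX)=0\), and injectivity yields \(\omega_2(\mfX)=0\).

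The main obstacle is making the invariance claim \(a_*\omega_2=\omega_2\) precise. One must check that an inertia automorphism genuinely acts on the étale-local Green data over \(Y\), and that its action on the class agrees with the linear action of its differential on \(T_+\) and \(T_-\). I would do this on an étale atlas. Locally, \(a\) is given by an automorphism of the split second-order chart inducing the identity on the reduced space. Such an automorphism maps local projections to local projections and intertwines the differences \(1+q_{ij}\) via conjugation. Modulo \(J^3\), conjugation acts on \(q_{ij}\in T_X\otimes\wedge^2T_-^\vee\) exactly through the differential, that is, through the scalar \(\chi\). Because the correction terms \(a_j-a_i\) transform the same way, the action is well defined on cohomology. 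Descent along the groupoid, as in \Cref{prop:omega-stack}, then finishes the argument.
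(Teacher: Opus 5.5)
Your proposal is correct and follows the paper's own argument: naturality of \(\omega_2\) under the inertia automorphism gives \(p^*\omega_2(\mfX)=\chi\,p^*\omega_2(\mfX)\), hence \(p^*\omega_2(\mfX)=0\), and \Cref{lem:finite-etale-injective} then gives \(\omega_2(\mfX)=0\). Your \v{C}ech-level justification of the invariance step and your remark about constant degree only spell out what the paper asserts in one line. One small correction: \(\mfX\times_X Y\) is not defined, since there is no map \(\mfX\to X\); what you want is the unique \'etale lift of \(Y\to X\) to \(\mfX\).
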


\begin{proof}
The primary Green obstruction
\(
   \omega_2(\mfX)\in H^1(X,\mathcal V_{\mfX})
\)
is obtained by an \'etale-local construction and descends to the bosonic stack
\(X\).  In particular, it is natural under automorphisms of the objects
parametrized by \(\mfX\).  Therefore, after pulling back to \(Y\), the bosonic
inertia section \(a\) fixes the pulled-back obstruction class:
\[
   a\cdot p^*\omega_2(\mfX)=p^*\omega_2(\mfX).
\]
On the other hand, by assumption \(a\) acts on the coefficient bundle
\(p^*\mathcal V_{\mfX}\) by multiplication by \(\chi\).  Hence
\[
   a\cdot p^*\omega_2(\mfX)=\chi\,p^*\omega_2(\mfX).
\]
Since \(\chi\ne1\), it follows that \(p^*\omega_2(\mfX)=0\).  By
\Cref{lem:finite-etale-injective}, pullback along \(p\) is injective on
cohomology.  Therefore \(\omega_2(\mfX)=0\).
\end{proof}

\subsection{Functoriality under arbitrary morphisms}

We shall also use a functorial form of the Donagi--Witten compatibility lemma.

\begin{lemma}[Functoriality of the primary obstruction]\label{lem:functoriality}
Let \(f:\mathfrak Y\to\mathfrak X\) be a morphism of smooth Deligne--Mumford
superstacks.  Set \(X=\mathfrak X_{\bos}\), \(Y=\mathfrak Y_{\bos}\), and let
\(f_0:Y\to X\) be the induced morphism on bosonic truncations.  Let
\[
   df_+:T_+\mathfrak Y\longrightarrow f_0^*T_+\mathfrak X,
   \qquad
   df_-:T_-\mathfrak Y\longrightarrow f_0^*T_-\mathfrak X
\]
be the even and odd components of the differential, restricted to \(Y\).  Put
\[
   F_{\mathfrak X}:=\cHom(\wedge^2T_-\mathfrak X,T_+\mathfrak X),
   \qquad
   F_{\mathfrak Y}:=\cHom(\wedge^2T_-\mathfrak Y,T_+\mathfrak Y),
\]
and introduce the comparison sheaf on \(Y\)
\[
   G_f:=\cHom\bigl(\wedge^2T_-\mathfrak Y,f_0^*T_+\mathfrak X\bigr).
\]
The odd differential induces
\[
   \wedge^2df_-:\wedge^2T_-\mathfrak Y\longrightarrow f_0^*\wedge^2T_-\mathfrak X.
\]
Therefore pullback followed by precomposition with \(\wedge^2df_-\) gives a map
\[
   \iota_f:H^1(X,F_{\mathfrak X})\longrightarrow H^1(Y,G_f).
\]
Similarly, composition with the even differential gives a map
\[
   j_f:H^1(Y,F_{\mathfrak Y})\longrightarrow H^1(Y,G_f).
\]
Then
\[
   j_f\bigl(\omega_2(\mathfrak Y)\bigr)
   =
   \iota_f\bigl(\omega_2(\mathfrak X)\bigr)
   \quad\text{in }H^1(Y,G_f).
\]
In particular, if \(j_f(\omega_2(\mathfrak Y))\ne0\), then
\(\omega_2(\mathfrak X)\ne0\).
\end{lemma}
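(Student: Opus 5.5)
The plan is to compare the two obstruction classes on a common étale-local model and reduce the global identity to a cocycle-level identity, using the Čech description of \(\omega_2\) recalled before \Cref{prop:omega-stack}.

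\emph{Step 1: choose compatible charts.} Take étale atlases \(U\to\mathfrak X\) and \(V\to\mathfrak Y\), refined so that \(f\) maps \(V\) into \(U\). Both are smooth superschemes, so after further refinement we may fix covers \(\{U_i\}\) of \(X\) and \(\{V_i\}\) of \(Y\) with \(f_0(V_i)\subset U_i\). On each chart we choose local second-order projections \(\pi_i^{\mathfrak X}\) on \(U_i\) and \(\pi_i^{\mathfrak Y}\) on \(V_i\), i.e.\ identifications of the second infinitesimal neighbourhood with the split model modulo \(J^3\). The transition data \(q^{\mathfrak X}_{ij}\in\Gamma(U_{ij},F_{\mathfrak X})\) and \(q^{\mathfrak Y}_{ij}\in\Gamma(V_{ij},F_{\mathfrak Y})\) then represent \(\omega_2(\mathfrak X)\) and \(\omega_2(\mathfrak Y)\). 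The ambiguities \(a_i\) only change these by coboundaries, and the constructions \(\iota_f\) and \(j_f\) respect coboundaries. So it suffices to produce a cochain \(c_i\in\Gamma(V_i,G_f)\) with
\[
df_+\circ q^{\mathfrak Y}_{ij}-(\wedge^2df_-)^*f_0^*q^{\mathfrak X}_{ij}=c_j-c_i .
\]

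\emph{Step 2: the local discrepancy.} On \(V_i\), consider the composite \(\pi^{\mathfrak X}_i\circ f\) and the map \(f\circ\pi^{\mathfrak Y}_i\), the latter read through the split model of \(U_i\) on the bosonic truncation. Both are morphisms \(V_i\to U_{i,\bos}\) modulo \(J^3\), and they agree modulo \(J^2\) because both restrict to \(f_0\). Two such morphisms with the same reduction differ, at order two, by an even derivation \(\mathcal O_{X}\to f_{0*}\wedge^2\mathcal F_{\mathfrak Y}\). This derivation is exactly a section \(c_i\) of \(\cHom(\wedge^2T_-\mathfrak Y,f_0^*T_+\mathfrak X)=G_f\). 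On overlaps, expand \(c_j-c_i\) using \(\pi^{\mathfrak X}_j=(1+q^{\mathfrak X}_{ij})\pi^{\mathfrak X}_i\) and \(\pi^{\mathfrak Y}_j=(1+q^{\mathfrak Y}_{ij})\pi^{\mathfrak Y}_i\). To first order in \(\wedge^2\) of the odd ideal, the \(\mathfrak X\)-transition pulls back along \(f\). That means its \(\wedge^2\mathcal F_{\mathfrak X}\)-valued coefficient is converted into a \(\wedge^2\mathcal F_{\mathfrak Y}\)-valued one via \(f^*:\mathcal F_{\mathfrak X}\to\mathcal F_{\mathfrak Y}\), which is dual to \(df_-\); this produces the term \((\wedge^2df_-)^*f_0^*q^{\mathfrak X}_{ij}\). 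The \(\mathfrak Y\)-transition is pushed forward by the even differential, giving \(df_+\circ q^{\mathfrak Y}_{ij}\). This yields the displayed identity, up to a sign convention.

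\emph{Step 3: descend to the stacks.} The construction of \(c_i\) is functorial under étale refinement, exactly as in the proof of \Cref{prop:omega-stack}. Hence the identity descends along the étale groupoids to an equality in \(H^1(Y,G_f)\). The final assertion then follows formally: if \(\omega_2(\mathfrak X)=0\), then \(\iota_f\) of it is zero, so \(j_f(\omega_2(\mathfrak Y))=0\).

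The main obstacle is Step 2. The key point there is that \(f\) does not preserve the chosen splittings, only the filtration. One therefore has to check carefully that the mixed terms vanish modulo \(J_{\mathfrak Y}^3\), in particular the terms where odd coordinates of \(\mathfrak Y\) enter \(f^*\) of even coordinates of \(\mathfrak X\) quadratically. My first approach would be to write \(f^*x=f_0^*x+\sum \phi_{ab}\theta_a\theta_b\) in local coordinates and verify directly that the \(\phi_{ab}\) cancel in \(c_j-c_i\). This cancellation works because the \(\phi_{ab}\) are chart-independent once both projections are fixed.
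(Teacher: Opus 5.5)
Your proposal is correct, and its skeleton matches the paper's: prove an identity for morphisms of superschemes on compatible \'etale charts, then pass to the stacks along the \'etale groupoids. The difference is in the local step. The paper does not prove it; it cites Donagi--Witten \cite[Corollaries 2.10 and 2.12]{DWnp} and supplies only the descent.

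You instead prove the local step directly, using the comparison cochain \(c_i=f^*\circ\pi_i^{\mathfrak X*}-\pi_i^{\mathfrak Y*}\circ f_0^*\). This is an even \(f_0\)-derivation \(\mathcal O_X\to f_{0*}(J_{\mathfrak Y}^2/J_{\mathfrak Y}^3)\), hence a section of \(G_f\). Telescoping gives \(c_j-c_i=f^*\circ q^{\mathfrak X}_{ij}-q^{\mathfrak Y}_{ij}\circ f_0^*=\iota_f(q^{\mathfrak X})_{ij}-j_f(q^{\mathfrak Y})_{ij}\).

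Your version is self-contained. More usefully, it gives a cochain-level identity on a single \'etale cover of \(Y\), which is what descent actually needs. Pullback of \(H^1\) to an atlas need not be injective, but \v{C}ech \(H^1\) of any cover always injects into \(H^1\). So your Step~3 is more precise than the paper's ``compatible with the groupoids, hence descends''.

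Two small corrections. First, the two maps \(\pi_i^{\mathfrak X}\circ f\) and \(f_0\circ\pi_i^{\mathfrak Y}\) agree modulo \(J_{\mathfrak Y}\) because both reduce to \(f_0\). They agree modulo \(J_{\mathfrak Y}^2\) only by parity, since the even part of \(J/J^3\) is \(J^2/J^3\). Second, the obstacle you flag at the end does not exist, and its stated justification is wrong. The \(\phi_{ab}\) are not chart-independent; they are exactly the components of \(c_i\). Nothing needs to cancel beyond the telescoping above, because the same ring map \(f^*\) occurs in both \(c_i\) and \(c_j\).
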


\begin{proof}
For ordinary supermanifolds, this is the Donagi--Witten functoriality statement for
the primary obstruction.  They first prove the compatibility formula for a
submanifold \(S'\subset S\), namely
\[
   j\bigl(\omega_2(S')\bigr)=\iota\bigl(\omega_2(S)\bigr),
\]
see \cite[Corollary 2.10]{DWnp}.  They then explain that the same construction
applies to an arbitrary morphism by using the two components of the differential;
this is \cite[Corollary 2.12]{DWnp}.

We pass from supermanifolds to Deligne--Mumford superstacks by \'etale descent.
Choose an \'etale atlas \(U\to\mathfrak X\) by a supermanifold and form the base
change \(\mathfrak Y_U=\mathfrak Y\times_{\mathfrak X}U\).  Choose an \'etale
atlas \(V\to\mathfrak Y_U\).  Then the morphism \(f\) is represented, after this
\'etale refinement, by a morphism of supermanifolds
\[
   f_U:V\longrightarrow U.
\]
By the supermanifold case,
\[
   j_{f_U}\bigl(\omega_2(V)\bigr)=\iota_{f_U}\bigl(\omega_2(U)\bigr).
\]
All the ingredients in this equality--the tangent bundles \(T_\pm\), the primary
obstruction, the differentials \(df_\pm\), and the coefficient morphisms
\(\iota\) and \(j\)--are compatible with \'etale pullback.  Hence the identity is
compatible with the \'etale groupoids presenting \(\mathfrak X\) and
\(\mathfrak Y\).  It therefore descends to \(H^1(Y,G_f)\).  The final
non-vanishing assertion is immediate.
\end{proof}

\section{The odd compactified genus-two component}\label{sec:g2-compactified}

We now introduce the main objects that will be used throughout this paper, and specialise the preceding formalism to the odd compactified genus-two
component of the supermoduli space. 

\subsection{Stable spin curves and the odd tangent bundle}

Felder--Kazhdan--Polishchuk prove that the moduli stack of stable supercurves
with prescribed Neveu--Schwarz and Ramond punctures is a smooth proper
Deligne--Mumford superstack \cite[Theorem A]{FKP}.  In the unpunctured case, the
bosonic truncation of \(\barmfM_g^\pm\) is the compactified spin stack
\(\barcS_g^\pm\).

Over an even base, a stable supercurve is described by a stable curve \(C\)
together with a generalized spin structure.  In the torsion-free model this is a
torsion-free rank-one sheaf \(L\) on \(C\) equipped with an isomorphism
\[
   L\xrightarrow{\sim}\cHom(L,\omega_C),
\]
where \(\omega_C\) is the dualising sheaf.  A node is Ramond when \(L\) is
locally free at the node and Neveu--Schwarz when it is not locally free; see
\cite{FKP}.

For a smooth spin curve \((C,L)\) of genus \(g\ge2\), with
\(L^{\otimes2}\simeq K_C\), the tangent spaces of the smooth supermoduli stack
are
\[
   T_+|_{(C,L)}=H^1(C,T_C),
   \qquad
   T_-|_{(C,L)}=H^1(C,L^{-1}).
\]
Equivalently, by Serre duality,
\[
   T_-^\vee|_{(C,L)}=H^0(C,K_C\otimes L).
\]
This follows from standard deformation theory of super Riemann surfaces, in
terms of the sheaf of superconformal vector fields; see \cite[Section 3.2]{DWnp}
and \cite[Section 2.2]{WittenNotes}.

In genus two, the stack \(\barmfM_2^-\) has dimension \(3|2\).  Let
\[
   E:=T_-\barmfM_2^-
\]
be its odd tangent bundle on \(\barcS_2^-\).  The primary obstruction lies in
\[
   H^1\left(\barcS_2^-,T_{\barcS_2^-}\otimes\det E^\vee\right).
\]
Thus Proposition \ref{prop:omega-stack} reduces the proof of splitness of
\(\barmfM_2^-\) to the vanishing of this class.

\subsection{The canonical genus-two involution}

Let \(\overline{\cM}_2\) be the ordinary Deligne--Mumford stack of stable curves
of genus two.  The bosonic truncation of \(\barmfM_2^-\) is
\[
   (\barmfM_2^-)_{\bos}=\barcS_2^-,
\]
and there is the usual forgetful morphism
\[
   \barcS_2^-\longrightarrow\overline{\cM}_2
\]
which forgets the spin structure.  The hyperelliptic involution on the smooth
genus-two locus extends canonically over the stable genus-two compactification.  Equivalently, the
universal stable genus-two curve carries a canonical involution extending the
hyperelliptic involution on the smooth locus.  We denote this involution by
\(\iota\).

\begin{proposition}\label{prop:hyperelliptic-invariance}
Let \((C,L,\kappa)\) be a stable odd genus-two spin curve, and let
\(\iota:C\to C\) be the canonical genus-two involution.  Then there exists an
isomorphism
\[
   \gamma:\iota^*L\xrightarrow{\sim}L
\]
compatible with the spin structure.
\end{proposition}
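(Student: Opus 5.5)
We first treat the smooth locus, then extend over the boundary by a connectedness and uniqueness argument.

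\emph{Smooth case.} Let \(C\) be a smooth genus-two curve with hyperelliptic involution \(\iota\) and Weierstrass points \(w_1,\dots,w_6\). A theta characteristic \(L\) is odd exactly when \(h^0(L)=1\). In genus two this means \(L\simeq\cO_C(w_i)\) for one Weierstrass point \(w_i\). Since \(\iota\) fixes \(w_i\), we get \(\iota^*L\simeq\cO_C(\iota^{-1}w_i)=L\).

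More intrinsically, \(\iota\) acts on \(H^1(C,\mathbb Z/2)\) by \(-1\), which is trivial mod \(2\). Hence \(\iota^*\) fixes every theta characteristic, and we do not need the parity hypothesis for this step.

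Given some isomorphism \(\gamma_0:\iota^*L\to L\), we compare the spin structures. The isomorphism \(\kappa:L^{\otimes2}\to\omega_C\) pulls back to \(\iota^*\kappa:(\iota^*L)^{\otimes 2}\to\iota^*\omega_C\). The canonical linearisation \(\iota^*\omega_C\simeq\omega_C\) is given by pullback of differentials. The two resulting isomorphisms \((\iota^*L)^{\otimes2}\to\omega_C\) differ by a scalar \(c\in\C^*\), because \(\mathrm{Aut}(\omega_C)=\C^*\) for \(C\) connected. Rescaling \(\gamma:=c^{-1/2}\gamma_0\) makes \(\kappa\circ\gamma^{\otimes2}=(\iota^*\kappa)\). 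Thus \(\gamma\) is compatible with the spin structure, unique up to sign.

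\emph{Stable case.} Write \((C,L,\kappa)\) in the torsion-free model, with \(\iota\) the canonical involution of the stable curve. There are two ways to argue.

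First, a direct argument. The pair \((\iota^*L,\iota^*\kappa)\) is again a generalized spin structure on \(C\), with the same parity, since \(h^0\) is preserved. It is also Neveu--Schwarz or Ramond at the nodes \(\iota(x)\) exactly as \(L\) is at \(x\).

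We would then check that \(\iota\) acts trivially on the finite set of generalized spin structures on \(C\). This would go component-by-component through the normalization and the node gluing data. On each normalization component of genus \(\le 2\), \(\iota\) restricts to the hyperelliptic involution, or to the elliptic involution \(-1\), or to an involution of a rational component. Each of these acts on \(2\)-torsion of the Jacobian by \(-1\), hence trivially mod \(2\). The gluing data at Ramond nodes are signs (square roots), and these have to be matched.

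Second, a topological argument, which is cleaner. The forgetful map \(\barcS_2^-\to\overline{\cM}_2\) is finite. The involution \(\iota\) of the universal curve induces an automorphism \(\iota^*\) of \(\barcS_2^-\) over \(\overline{\cM}_2\). The fixed locus of a finite-order automorphism of a separated Deligne--Mumford stack is closed. On the dense smooth locus we have shown \(\iota^*\simeq\id\) as a map of spin structures, with the isomorphism \(\gamma\) constructed above.

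Since \(\barcS_2^-\) is separated and the smooth locus is dense, \(\iota^*\) and \(\id\) agree on the coarse space. Over each point we then obtain an abstract isomorphism \((\iota^*L,\iota^*\kappa)\simeq(L,\kappa)\), which is exactly a \(\gamma\) as required. To make the isomorphism itself extend, we use that \(\mathrm{Isom}\) between two objects of a separated Deligne--Mumford stack is finite and unramified, and that the family \(\gamma\) over the generic point extends by the valuative criterion. Concretely, take a one-parameter smoothing \(\mathcal C\to\Delta\) of \(C\) and extend \(\gamma\) from \(\Delta^*\) to \(\Delta\).

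\emph{Main obstacle.} The hard step is this extension at Ramond nodes and at separating nodes. There, \(\iota\) may act on the node branches nontrivially, for example by swapping branches, so one must check that the limiting \(\gamma\) is an isomorphism rather than merely a map. My first attempt is properness of \(\mathrm{Isom}\) on \(\barcS_2^-\), which holds because it is a proper Deligne--Mumford stack by \cite{FKP}. This makes the extended \(\gamma\) an isomorphism and handles all boundary types uniformly.
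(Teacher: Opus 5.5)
Your strategy is essentially the paper's. You handle the smooth case via $L\simeq\cO_C(w)$ for a Weierstrass point $w$, and then extend to the boundary using finiteness of the functor of compatible lifts and density of the smooth odd locus. The paper phrases the second step as ``the image in $\barcS_2^-$ of the finite lift functor is closed and contains the dense smooth locus, hence is everything by irreducibility''. Your valuative-criterion argument along a one-parameter smoothing, with finiteness coming from the finite diagonal of the separated Deligne--Mumford stack $\barcS_2^-$, is the same argument in one-dimensional form.

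\textbf{The coarse-space step is wrong.} It should be removed. For any automorphism $\phi$ of $C$, the pair $(\phi,\id)$ is already an isomorphism $(C,\phi^*L,\phi^*\kappa)\xrightarrow{\sim}(C,L,\kappa)$ in the stack. So $\iota^*$ is tautologically $2$-isomorphic to the identity of $\barcS_2^-$, and agreement on coarse spaces carries no information. A compatible $\gamma$ is an isomorphism lying over $\id_C$, i.e.\ it expresses that $\iota$ lies in the image of $\Aut(C,L,\kappa)\to\Aut(C)$. Coarse spaces cannot detect this, so ``which is exactly a $\gamma$ as required'' does not follow. The correct version of your ``fixed locus is closed'' is precisely the paper's statement that the image of the finite lift functor is closed.

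\textbf{What the Isom step needs.} The proof must rest entirely on the $\mathrm{Isom}$ step, and two details need care there. First, use the subscheme $\mathcal I$ of isomorphisms lying over $\id_{\mathcal C}$. It is closed in the finite stack-theoretic $\mathrm{Isom}$, hence finite over $\Delta$. Second, a compatible $\gamma$ on the generic fibre is a priori defined only after extracting a square root. That means a ramified double base change, which leaves the central fibre unchanged. Most simply: $\mathcal I\to\Delta$ is finite, and its image contains the generic point by the smooth case, so it also contains the closed point.

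\textbf{A minor slip in the intrinsic remark.} Trivial action of $\iota$ on $H^1(C,\mathbb Z/2)$ only shows that $\iota^*$ translates all theta characteristics by a single $2$-torsion line bundle. To conclude, you still need one $\iota$-invariant theta characteristic. Alternatively, use the relation $L\otimes\iota^*L\simeq\omega_C$, which holds for every degree-one line bundle in genus two.
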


\begin{proof}
We first consider the smooth locus.  If \(C\) is a smooth genus-two curve, then
\(C\) is hyperelliptic.  The odd theta characteristics are precisely the line
bundles \(L\simeq\cO_C(w)\), where \(w\) is a Weierstrass point.  Since every
Weierstrass point is fixed by the hyperelliptic involution, \(\iota^*L\simeq L\)
on the smooth odd locus.

Now consider the compactified odd spin stack \(\barcS_2^-\).  The functor of
compatible lifts of \(\iota\) is a closed subfunctor of the relative isomorphism
functor between the two generalized spin structures \(\iota^*L\) and \(L\); the
compatibility condition with the spin form cuts out a finite closed subfunctor.
Explicitly, compatibility means that the square
\[
\begin{tikzcd}
   (\iota^*L)^{\otimes2} \arrow[r,"\gamma^{\otimes2}"] \arrow[d,"\iota^*\kappa"']
   & L^{\otimes2} \arrow[d,"\kappa"] \\
   \iota^*\omega_C \arrow[r,"\iota_\omega"'] & \omega_C
\end{tikzcd}
\]
commutes.  Hence the image of this finite closed functor in \(\barcS_2^-\) is a
closed substack.  This image contains the dense smooth odd spin locus.  Since
\(\barcS_2^-\) is irreducible, being the Cornalba compactification of the
irreducible odd spin moduli stack, the image is all of \(\barcS_2^-\).  Hence
every stable odd genus-two spin curve admits a compatible lift of the canonical
genus-two involution.
\end{proof}

\subsection{The smoothable lift cover}

Let \(T\) be a bosonic test scheme, or more generally a bosonic Deligne--Mumford
stack, and let \((C/T,L,\kappa)\) be a family of generalized stable spin curves
of genus two.  We denote by \(\iota_T:C\to C\) the relative canonical genus-two
involution, and by
\[
   \iota_\omega:\iota_T^*\omega_{C/T}\xrightarrow{\sim}\omega_{C/T}
\]
the induced isomorphism on the relative dualising sheaf.  A compatible lift of
\(\iota_T\) to the spin structure is an isomorphism
\[
   \gamma:\iota_T^*L\xrightarrow{\sim}L
\]
such that the spin structure is preserved.

\begin{definition}\label{def:lift-stack}
Let \(\mathscr L_2^-\to\barcS_2^-\) be the stack whose \(T\)-points are pairs
\[
   \bigl((C/T,L,\kappa),\gamma\bigr),
\]
where \((C/T,L,\kappa)\) is a \(T\)-point of \(\barcS_2^-\) and
\(\gamma:\iota_T^*L\xrightarrow{\sim}L\) is a compatible lift of the relative
canonical genus-two involution.  Morphisms in \(\mathscr L_2^-\) are isomorphisms
of stable spin curves commuting with the chosen compatible lifts.
\end{definition}

Over the open substack \(\cS_2^-\subset\barcS_2^-\) of smooth odd spin curves,
every compatible lift is smoothable, and
\(\mathscr L_2^-|_{\cS_2^-}\to\cS_2^-\) is the two-sheeted cover parametrising the
two compatible lifts.  We define
\[
   \widehat{\cS}_2^-\subset\mathscr L_2^-
\]
to be the stack-theoretic closure of this two-sheeted cover inside
\(\mathscr L_2^-\).  Equivalently, a geometric point of \(\widehat{\cS}_2^-\) is
a compatible lift which occurs as the limit of compatible lifts along a
smoothing to the smooth odd locus.  We call such lifts smoothable compatible
lifts.  We denote by
\[
   p:\widehat{\cS}_2^-\longrightarrow\barcS_2^-
\]
the morphism induced by forgetting the compatible lift.

The compatible lift will be used as a bosonic inertia section of
\(\barmfM_2^-\) over the finite \'etale cover
\[
   p:\widehat{\cS}_2^-\longrightarrow\barcS_2^-,
\]
in the sense of \Cref{def:inertia-stack}.

\begin{proposition}\label{prop:lift-cover}
The morphism
\[
   p:\widehat{\cS}_2^-\longrightarrow\barcS_2^-
\]
is finite \'etale, surjective, and of degree two.  Moreover, over
\(\widehat{\cS}_2^-\), the tautological compatible lift
\[
   \widehat\iota=(\iota,\gamma)
\]
defines a bosonic inertia section
\[
   \widehat\iota:
   \widehat{\cS}_2^-
   \longrightarrow
   I_{\barmfM_2^-}^{\bos}\times_{\barcS_2^-}\widehat{\cS}_2^- .
\]
\end{proposition}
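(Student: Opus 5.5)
The plan is to treat the two assertions separately: first finite étaleness of \(p\), then the inertia section.

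\textbf{The covering \(p\).} The lift stack \(\mathscr L_2^-\to\barcS_2^-\) is representable, finite and unramified. Its fibre over a point \((C,L,\kappa)\) is the set of isomorphisms \(\gamma:\iota^*L\to L\) whose square is prescribed by \(\iota_\omega\) and \(\kappa\). By \Cref{prop:hyperelliptic-invariance} this set is nonempty. It is a torsor under the automorphisms of \(L\) preserving \(\kappa\).

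For locally free \(L\) this torsor is \(\{\pm1\}\), so the fibre has exactly two points. Over the smooth locus \(\cS_2^-\) this gives the two-sheeted étale cover, as asserted in the text.

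At Neveu--Schwarz nodes the automorphism group of a generalized spin structure can be larger, with independent signs on the components of the partial normalisation. So \(\mathscr L_2^-\) itself may have more than two points over a boundary point. This is exactly why we pass to the closure \(\widehat{\cS}_2^-\).

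\textbf{The main step: étaleness at the boundary.} Here I would work étale-locally near a boundary point using the local model of the Cornalba compactification. Near a node the spin structure is described over the smoothing parameter, and the lifts \(\gamma\) extend along a smoothing family as \(\pm\) a chosen lift \(\gamma_0\). The sign \(\pm\) is locally constant on the connected smoothing family.

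It follows that the closure of the two sheets consists, étale-locally, of two disjoint copies of the base, namely \(\gamma_0\) and \(-\gamma_0\). These are distinct because \(-1\neq1\) on \(L\). I expect the hardest point to be checking that these two limits do not coincide and do not branch over the boundary. I would handle it by observing that \(\gamma\mapsto-\gamma\) is a fixed-point-free involution of \(\widehat{\cS}_2^-\) over \(\barcS_2^-\).

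Given this, \(p\) is a \(\mu_2\)-torsor, hence finite étale of degree two. It is surjective by \Cref{prop:hyperelliptic-invariance} combined with properness: the closure of the dense smooth locus maps onto the irreducible \(\barcS_2^-\). Finiteness of \(\widehat{\cS}_2^-\) follows because it is a closed substack of the finite \(\mathscr L_2^-\).

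\textbf{The inertia section.} By \cite{FKP}, a stable supercurve over an even base is equivalent to the data \((C/T,L,\kappa)\). So a pair \((\iota_T,\gamma)\) is an automorphism of the \(T\)-point \(\jmath\circ p\) of \(\barmfM_2^-\) covering the identity of \(T\). This is because \(\iota_T\) acts on the curve fibrewise over \(T\) and \(\gamma\) is compatible with \(\kappa\).

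Functoriality in \(T\) is clear, since \(\gamma\) is tautological on \(\widehat{\cS}_2^-\). By \Cref{def:inertia-stack}, this datum is precisely a section of \(I_{\barmfM_2^-}^{\bos}\times_{\barcS_2^-}\widehat{\cS}_2^-\).
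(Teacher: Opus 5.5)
Your handling of the smooth locus, of finiteness and surjectivity, and of the inertia section matches the paper, but the boundary step has a real gap.

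The assertion that the lifts ``extend along a smoothing family as \(\pm\) a chosen lift \(\gamma_0\)'' is precisely what has to be proved. As you note yourself, at a Neveu--Schwarz boundary point the fibre of \(\mathscr L_2^-\) is a torsor under the inessential sign group \(\{\pm1\}^{\operatorname{CC}(X^{\mathrm{ne}})}\), so it can contain several \(\pm\)-pairs. Nothing you write excludes that smoothings in different directions of the multi-parameter local deformation space have different limits \(\gamma_0\) and \(\epsilon\gamma_0\), with \(\epsilon\) a non-constant sign vector. Local constancy of the sign along one smoothing family says nothing about this.

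The fixed-point-free involution \(\gamma\mapsto-\gamma\) does not repair this. It shows that every fibre of \(p\) is a union of free \(\mu_2\)-orbits, so the two limits along a given smoothing are distinct. But \(p\) is a \(\mu_2\)-torsor only if every fibre is a \emph{single} orbit. If a boundary fibre contained two orbits, the fibre degree would jump to four, and \(p\) would be neither flat nor of degree two.

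The paper supplies exactly this count. An inessential sign change \((\epsilon_Y)\) acts on the square-root smoothing coordinate of an exceptional component \(E_i\) joining \(Y_+\) and \(Y_-\) by \(\tau_i\mapsto\epsilon_{Y_+}\epsilon_{Y_-}\tau_i\). Hence it is compatible with smoothing over the identity of the base only if \(\epsilon_{Y_+}=\epsilon_{Y_-}\) along every edge of the graph \(\Gamma_X\). Connectedness of \(\Gamma_X\) then forces constant signs, leaving exactly the two lifts \(\pm\gamma_0\), which give two disjoint sections over the local chart.

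Alternatively, you can close the gap with your own remark that \(\mathscr L_2^-\) is finite and unramified. Work over the strict henselisation \(\Spec A\) of \(\barcS_2^-\) at a boundary point; \(A\) is a domain because the spin stack is smooth. There a finite unramified scheme splits into local pieces, each mapping to \(\Spec A\) by a closed immersion. Every piece of the closure has non-empty, hence surjective, étale restriction to the smooth locus. So each piece is a closed subscheme of \(\Spec A\) containing the generic point, and therefore equals \(\Spec A\). The number of pieces is then the generic degree, namely two.

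One of these two arguments is needed before you can conclude that \(p\) is a \(\mu_2\)-torsor.
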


\begin{proof}
The assertion is local on \(\barcS_2^-\).  We use Cornalba's quasistable
presentation of compactified spin curves \cite{Cornalba}; for the local
deformation coordinates we also use the standard description of the spin
smoothing parameters in Cornalba's model.

Over the smooth odd spin locus \(\cS_2^-\), the compatible lifts can be described
directly.  If \((C,L,\kappa)\) is a smooth odd genus-two spin curve, then
\(L\simeq\cO_C(w)\) for a Weierstrass point \(w\), and the hyperelliptic
involution fixes \(w\).  Hence \(\iota^*L\simeq L\).  Any compatible lift differs
from any other by the inessential automorphism \(\pm\id_L\).  Thus the
compatible lifts form a torsor under \(\mu_2\), and the lift stack is a finite
\'etale double cover over the smooth locus.

It remains to analyse the boundary.  Let \((X,\theta,b)\) be a quasistable spin
curve with stable model \(\beta:X\to C\).  Thus \(X\) is obtained from the
stable curve \(C\) by replacing some nodes by exceptional components
\(E\simeq\PP^1\), the line bundle \(\theta\) satisfies \(\theta|_E\simeq\cO_E(1)\)
on every exceptional component, and the associated torsion-free spin sheaf on
\(C\) is \(\beta_*\theta\).

Let \(X^{\mathrm{ne}}\subset X\) be the closed subcurve given by the union of the
non-exceptional irreducible components of \(X\), with its induced nodal curve
structure.  Denote by \(\operatorname{CC}(X^{\mathrm{ne}})\) the set of connected
components of \(X^{\mathrm{ne}}\).  An inessential automorphism is given by a
choice of signs
\[
   (\epsilon_Y)_{Y\in\operatorname{CC}(X^{\mathrm{ne}})}
   \in\{\pm1\}^{\operatorname{CC}(X^{\mathrm{ne}})},
\]
acting on \(\theta|_Y\) by multiplication by \(\epsilon_Y\).  The action on the
exceptional components is then determined by compatibility with the spin
structure.

Suppose that an exceptional component \(E_i\subset X\) meets the two
non-exceptional connected components \(Y_+\) and \(Y_-\) of \(X^{\mathrm{ne}}\),
possibly with \(Y_+=Y_-\).  Let \(t_i\) be the ordinary smoothing parameter of
the corresponding node of the stable model \(C\).  In the spin deformation
space, the corresponding coordinate is a square-root coordinate \(\tau_i\)
satisfying
\[
   t_i=\tau_i^2.
\]
With respect to this coordinate, the sign automorphism acts by
\[
   \tau_i\longmapsto \epsilon_{Y_+}\epsilon_{Y_-}\tau_i .
\]
The local configuration may be visualised as follows.
\[
\begin{tikzpicture}[node distance=1.2cm,every node/.style={font=\small}]
\node[box,text width=2.1cm,align=center] (yp) {$Y_+$\\ sign $\epsilon_{Y_+}$};
\node[box,text width=2.35cm,align=center,right=of yp] (ei) {$E_i\simeq\PP^1$\\ $t_i=\tau_i^2$};
\node[box,text width=2.1cm,align=center,right=of ei] (ym) {$Y_-$\\ sign $\epsilon_{Y_-}$};
\draw[-,thick] (yp) -- (ei);
\draw[-,thick] (ei) -- (ym);
\node[below=0.55cm of ei,text width=7.6cm,align=center]
{$\tau_i\mapsto\epsilon_{Y_+}\epsilon_{Y_-}\tau_i$; smoothability over the same base requires $\epsilon_{Y_+}=\epsilon_{Y_-}$.};
\end{tikzpicture}
\]

Fix one compatible lift of the canonical genus-two involution at the central
fibre.  Any other compatible lift differs from it by an inessential
automorphism, hence by a sign choice \((\epsilon_Y)_Y\) on the connected
components \(Y\) of \(X^{\mathrm{ne}}\).

Let \(\Gamma_X\) be the graph whose vertices are the connected components of
\(X^{\mathrm{ne}}\) and whose edges are the exceptional components of \(X\);
an exceptional component meeting the same connected component twice gives a
loop.  Since \(X\) is connected, \(\Gamma_X\) is connected.  A smoothing to the
smooth odd locus smooths every exceptional component.  For an exceptional
component \(E_i\) joining vertices \(Y_+\) and \(Y_-\), the corresponding spin
smoothing coordinate satisfies
\[
   \tau_i\longmapsto \epsilon_{Y_+}\epsilon_{Y_-}\tau_i .
\]
Thus a lift extends over the identity of the smoothing base if and only if
\[
   \epsilon_{Y_+}\epsilon_{Y_-}=1
\]
for every edge of \(\Gamma_X\).  Since \(\Gamma_X\) is connected, this is
equivalent to the signs being constant:
\[
   \epsilon_Y=\epsilon\quad\text{for all }Y,
   \qquad \epsilon\in\{\pm1\}.
\]
Conversely, the two constant sign choices act trivially on every spin smoothing
coordinate \(\tau_i\), and therefore extend over every smoothing family.  Hence
the smoothable compatible lifts are exactly the two constant-sign lifts.

In Cornalba's local deformation coordinates, the two smoothable lifts are
therefore represented by two disjoint sections of the finite lift stack over the
completed local ring at the given boundary point.  Since this description is
stable under \'etale localisation, the morphism \(p\) is \'etale-locally the
projection
\[
   \barcS_2^-\times\mu_2\longrightarrow \barcS_2^- .
\]
Equivalently, the local model is
\[
\begin{tikzcd}[column sep=large]
   \barcS_2^-\times\mu_2 \arrow[r, "\sim"] \arrow[d, "\pr_1"']
   & \widehat{\cS}_2^- \arrow[d, "p"] \\
   \barcS_2^- \arrow[r, equal]
   & \barcS_2^- .
\end{tikzcd}
\]
Hence \(p:\widehat{\cS}_2^-\to\barcS_2^-\) is finite \'etale, surjective, and of degree
two.

Finally, over \(\widehat{\cS}_2^-\) the compatible lift is tautological.  The pair
\(\widehat\iota=(\iota,\gamma)\) is therefore a functorial automorphism of the
pulled-back stable spin curve, equivalently of the corresponding stable
supercurve over the bosonic base \(\widehat{\cS}_2^-\).  Hence it defines a
bosonic inertia section
\[
   \widehat\iota:
   \widehat{\cS}_2^-
   \longrightarrow
   I_{\barmfM_2^-}^{\bos}\times_{\barcS_2^-}\widehat{\cS}_2^- .
\]
\end{proof}

\subsection{The inertia character and splitness}

Let \(E=T_-\barmfM_2^-\) be the odd tangent bundle on \(\barcS_2^-\), and put
\(\widehat E:=p^*E\).  Let \(\widehat\iota\) denote the tautological bosonic
inertia section of Proposition \ref{prop:lift-cover}.

\begin{proposition}\label{prop:character}
The bosonic inertia section \(\widehat\iota\) acts trivially on
$
   p^*T_{\barcS_2^-}
$
and acts by the scalar \(-1\) on
\(
   \det\widehat E^\vee.
\)
Consequently, it acts by the scalar \(-1\) on the primary obstruction sheaf
\[
   p^*\left(T_{\barcS_2^-}\otimes\det E^\vee\right).
\]
\end{proposition}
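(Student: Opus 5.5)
The argument works fibrewise, at a geometric point of \(\widehat{\cS}_2^-\), and then uses connectedness and continuity to extend the conclusion. The inertia section \(\widehat\iota\) acts on the locally free sheaves \(p^*T_{\barcS_2^-}\) and \(\det\widehat E^\vee\) by bundle automorphisms. Since \(\widehat\iota^2\) is an inessential automorphism, \(\widehat\iota\) has finite order. Its characteristic polynomial on each fibre is therefore locally constant, with eigenvalues roots of unity. In particular, on the line bundle \(\det\widehat E^\vee\) the action is multiplication by a locally constant function. It therefore suffices to compute the scalar at one point of each connected component of \(\widehat{\cS}_2^-\), and a generic smooth point suffices because the smooth locus is dense. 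For the even tangent bundle, we need triviality as an endomorphism and not just in its eigenvalues. I would argue that the even differential of \(\widehat\iota\) factors through the induced automorphism of the underlying object of \(\barcS_2^-\). That automorphism is the image of the inertia section in the inertia of the bosonic stack. An automorphism in the inertia of a stack acts on the pulled-back tangent bundle through the stack's tangent structure, so it acts by the identity on deformations modulo automorphisms. Concretely, at a smooth point, \(T_+=H^1(C,T_C)\) and \(\iota\) acts on it by pullback. The hyperelliptic involution acts trivially on \(H^1(C,T_C)\) in genus two, because every genus-two deformation is hyperelliptic and \(\iota\) is central. Equivalently, \(H^1(C,T_C)\cong H^0(C,K_C^2)^\vee\), and \(\iota\) acts trivially on \(H^0(K_C^2)=\Sym^2H^0(K_C)\), since it acts by \(-1\) on \(H^0(K_C)\). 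By density and continuity of an endomorphism of a vector bundle, the action is then trivial on all of \(p^*T_{\barcS_2^-}\).

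The odd direction requires a computation. At a smooth point, \(T_-^\vee=H^0(C,K_C\otimes L)\) with \(L=\cO_C(w)\), so \(K_C\otimes L=\cO_C(3w)\) because \(K_C=\cO_C(2w)\). This space has dimension \(2\). I would take the hyperelliptic model \(y^2=f(x)\) with \(f\) of degree \(5\) and \(w=\infty\). Then \(H^0(K_C)\) is spanned by \(dx/y\) and \(x\,dx/y\), and \(H^0(\cO_C(w))\) is spanned by a section \(s\) with divisor \(w\). A basis of \(H^0(K_C\otimes L)\) is \(s\cdot dx/y\) and \(s\cdot x\,dx/y\). The lift \(\gamma\) acts on \(s\) by a sign \(\pm1\), and the two choices of sign are exactly the two points of the fibre of \(p\). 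The involution \(\iota\) acts on both differentials by \(-1\). Hence \(\widehat\iota\) acts on \(\widehat E^\vee\) by the scalar \(\mp1\) on the whole two-dimensional space, and on \(\det\widehat E^\vee=\wedge^2\) by \((\mp1)^2=1\).

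This computation contradicts the claim, so a more careful treatment of the spin pairing is needed. The dualities \(T_-=H^1(L^{-1})\) and \(H^0(K\otimes L)\) involve \(\kappa\colon L^2\cong K\), together with the superconformal structure. The correct action of \(\widehat\iota\) on odd deformations of a super Riemann surface is not the naive pullback on \(H^0(K\otimes L)\). In the super setting, the odd coordinate \(\theta\) transforms as a section of \(L^{-1}\), and a superconformal automorphism lifting \(\iota\) acts on \(\theta\) by \(\gamma\). The main obstacle is to pin down this action consistently. I would compute it directly via Čech cocycles: write the odd deformation in \(H^1(C,L^{-1})\), pull back by \((\iota,\gamma)\), and track the signs \(\iota^*\) acquires on \(T_C\cong K^{-1}\) near the Weierstrass points, where it acts on the local coordinate by \(z\mapsto -z\). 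The expected outcome is that the two odd eigenvalues are \(+1\) and \(-1\), for instance from splitting \(H^1(L^{-1})\) into \(\iota\)-eigenspaces via the Lefschetz formula at the six Weierstrass points. A determinant of \(-1\) would follow, independently of the sign of \(\gamma\). I would verify this with the holomorphic Lefschetz fixed-point formula for \(\iota\) acting on \(H^1(C,L^{-1})\), with \(H^0(L^{-1})=0\). The trace is \(-\sum_{P}\operatorname{tr}(\gamma|_{L^{-1}_P})/(1-(-1))\), and it should come out to \(0\).

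Finally, the sign of \(\gamma\) at each fixed point contributes factors that must be consistent with the pairing \(\kappa\). The key input is that the local weights of \(\gamma\) on \(L_P\) differ between \(P=w\) and the other five Weierstrass points, which makes the trace vanish. Once the fibrewise statement holds on the dense smooth locus, the locally-constant argument extends it to all of \(\widehat{\cS}_2^-\), and tensoring gives the scalar \(-1\) on \(p^*(T_{\barcS_2^-}\otimes\det E^\vee)\).
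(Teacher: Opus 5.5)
Your treatment of the even part is fine and agrees with the paper: the action on $H^0(C,K_C^{\otimes2})\simeq\Sym^2H^0(C,K_C)$ is trivial, hence so is the action on $H^1(C,T_C)$. So is the reduction to the dense smooth locus.

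The gap is in the odd part, and it comes from one incorrect assumption: that the lift $\gamma$ acts on $s\in H^0(C,L)$ by $\pm1$. For a lift compatible with the spin structure this is impossible. Write $\gamma(\iota^*s)=\lambda s$. Compatibility with $\kappa\colon L^{\otimes2}\simeq K_C$ means that the induced action on the non-zero differential $\kappa(s^{\otimes2})$ is $\lambda^2$; in your model this is a multiple of $dx/y$, the differential with a double zero at $w$. Since $\iota$ acts on $H^0(C,K_C)$ by $-1$, this gives $\lambda^2=-1$, so $\lambda=\pm i$.

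The lift with $\lambda=\pm1$ that you used is the canonical lift of $\iota$ to $\cO_C(w)$. Its square corresponds under $\kappa$ to $-\iota_\omega$ rather than $\iota_\omega$, so it is not an automorphism of the spin curve. Equivalently, superconformality at a fixed point where $z\mapsto -z$ forces $\theta\mapsto\pm i\theta$.

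With $\lambda=\pm i$, your own first computation already finishes the proof. Through $H^0(C,K_C)\otimes H^0(C,L)\simeq H^0(C,K_C\otimes L)$, $\widehat\iota$ acts on $\widehat E^\vee$ by the scalar $-\lambda$. Hence it acts on $\det\widehat E^\vee$ by $\lambda^2=-1$, whichever of the two lifts $\pm\gamma$ is chosen. This is exactly the paper's argument. There is no discrepancy between the naive pullback on $H^0(C,K_C\otimes L)$ and the action on odd deformations, provided the lift is $\kappa$-compatible.

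Consequently the programme in your last two paragraphs is misdirected, and the outcome you predict is false. With the spin-compatible lift, $\gamma_P^2=-1$ on $L_P$ at each of the six Weierstrass points. Comparing with $\gamma(\iota^*s)=\lambda s$ gives $\gamma_w=-\lambda$, because $s$ vanishes simply at $w$, and $\gamma_P=\lambda$ at the other five points.

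Your Lefschetz formula then yields $\operatorname{tr}\bigl(\widehat\iota\mid H^1(C,L^{-1})\bigr)=-\tfrac12\bigl(5\lambda^{-1}-\lambda^{-1}\bigr)=2\lambda\neq0$. This is consistent with $\widehat\iota$ acting on $\widehat E$ by the scalar $\lambda$. So the eigenvalues are not $+1,-1$, and the trace does not vanish. In fact, a one-versus-five split of six weights of equal modulus can never sum to zero, so the heuristic you state is internally inconsistent.

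As written, the proposal gives no valid derivation of the character $-1$ on $\det\widehat E^\vee$. The missing step is simply to impose $\kappa$-compatibility on $\gamma$, which forces $\lambda^2=-1$.
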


\begin{proof}
The smooth odd locus is dense in \(\widehat{\cS}_2^-\).  Since the objects
involved are vector bundles, it is enough to compute the character on this dense
open locus.
Let \((C,L)\) be a smooth genus-two odd spin curve.  Then \(C\) is hyperelliptic
and \(L\simeq\cO_C(w)\) for a Weierstrass point \(w\).  The even tangent space to
the spin stack at \((C,L)\) is \(H^1(C,T_C)\), whose dual is
\(H^0(C,K_C^{\otimes2})\).  The hyperelliptic involution acts by \(-1\) on
\(H^0(C,K_C)\).  Since in genus two the multiplication map
\[
   \Sym^2H^0(C,K_C)\xrightarrow{\sim}H^0(C,K_C^{\otimes2})
\]
is an isomorphism, the induced action on \(H^0(C,K_C^{\otimes2})\) is trivial.
Therefore the induced action on \(H^1(C,T_C)\) is trivial.  Hence
\(\widehat\iota\) acts trivially on \(p^*T_{\barcS_2^-}\).

By Serre duality, the odd cotangent space is
\[
   E^\vee|_{(C,L)}=H^0(C,K_C\otimes L).
\]
Since \(L\) is an odd theta characteristic of genus two, \(h^0(C,L)=1\).  Choose
a non-zero section \(s\in H^0(C,L)\).  Multiplication by \(s\) gives an
isomorphism
\[
   H^0(C,K_C)\otimes H^0(C,L)\xrightarrow{\sim}H^0(C,K_C\otimes L).
\]
Let \(\gamma:\iota^*L\xrightarrow{\sim}L\) be the chosen compatible lift.  It
induces an automorphism
\[
   A_\gamma:H^0(C,L)\longrightarrow H^0(C,L),
   \qquad
   A_\gamma(s)=\gamma(\iota^*s).
\]
Since \(H^0(C,L)\) is one-dimensional, \(A_\gamma(s)=\lambda s\) for some
\(\lambda\in\C^*\).  The compatibility of \(\gamma\) with the spin isomorphism
\(\kappa:L^{\otimes2}\simeq K_C\) implies that the induced action on the
non-zero differential \(\kappa(s^{\otimes2})\in H^0(C,K_C)\) is multiplication by
\(\lambda^2\).  On the other hand, the hyperelliptic involution acts by \(-1\) on
\(H^0(C,K_C)\).  Since \(\kappa(s^{\otimes2})\neq0\), it follows that
\[
   \lambda^2=-1.
\]
With respect to the multiplication isomorphism above, the action on the odd
cotangent space is
\[
   (-1)\otimes\lambda=-\lambda.
\]
Equivalently,
\[
\begin{tikzcd}[column sep=large,row sep=large]
   H^0(C,K_C)\otimes H^0(C,L) \arrow[r, "\sim"] \arrow[d, "(-1)\otimes\lambda"']
   & H^0(C,K_C\otimes L) \arrow[d, "-\lambda"] \\
   H^0(C,K_C)\otimes H^0(C,L) \arrow[r, "\sim"']
   & H^0(C,K_C\otimes L) .
\end{tikzcd}
\]
Since \(\dim H^0(C,K_C\otimes L)=2\), the action on its determinant is
\[
   (-\lambda)^2=\lambda^2=-1.
\]
Thus \(\widehat\iota\) acts by \(-1\) on \(\det E^\vee\).  This character is
independent of the choice of smoothable lift, since changing the lift replaces
\(\lambda\) by \(-\lambda\), while the determinant character remains
\(\lambda^2=-1\).  The character therefore extends to all of
\(\widehat{\cS}_2^-\).
\end{proof}

\begin{theorem}\label{thm:g2-main}
The odd compactified genus-two supermoduli stack \(\barmfM_2^-\) is split.
\end{theorem}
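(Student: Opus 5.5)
The plan is to assemble the ingredients already established, namely Proposition \ref{prop:omega-stack}, Lemma \ref{lem:inertia-character}, Proposition \ref{prop:lift-cover} and Proposition \ref{prop:character}.

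First, record that \(\barmfM_2^-\) is a smooth proper Deligne--Mumford superstack by \cite[Theorem A]{FKP}, of dimension \(3|2\). Hence its odd tangent bundle \(E=T_-\barmfM_2^-\) has rank two, and \(\bigwedge^2 E^\vee=\det E^\vee\). The coefficient bundle is therefore
\[
   \mathcal V_{\barmfM_2^-}=T_{\barcS_2^-}\otimes\det E^\vee .
\]
By the rank-two case of Proposition \ref{prop:omega-stack}, splitness of \(\barmfM_2^-\) is equivalent to the vanishing of \(\omega_2(\barmfM_2^-)\in H^1(\barcS_2^-,\mathcal V_{\barmfM_2^-})\). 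So it suffices to prove \(\omega_2(\barmfM_2^-)=0\).

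Second, I would apply Lemma \ref{lem:inertia-character} with \(Y=\widehat{\cS}_2^-\) and the smoothable lift cover \(p:\widehat{\cS}_2^-\to\barcS_2^-\).
\begin{itemize}
   \item[(a)] By Proposition \ref{prop:lift-cover}, \(p\) is finite \'etale, surjective and of constant degree two. This is exactly the hypothesis needed to invoke Lemma \ref{lem:finite-etale-injective} inside the proof of the inertia criterion.
   \item[(b)] The tautological compatible lift \(\widehat\iota\) is a bosonic inertia section over \(Y\).
   \item[(c)] By Proposition \ref{prop:character}, \(\widehat\iota\) acts on \(p^*\mathcal V_{\barmfM_2^-}\) by the scalar \(\chi=-1\ne1\): it acts trivially on \(p^*T_{\barcS_2^-}\) and by \(-1\) on \(\det\widehat E^\vee\).
\end{itemize}
The lemma then yields \(\omega_2(\barmfM_2^-)=0\), and the first step converts this into splitness.

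The only point needing care is the identification of the action in Lemma \ref{lem:inertia-character} with the character computed in Proposition \ref{prop:character}. That character was computed on the dense smooth locus. It extends to the whole of \(\widehat{\cS}_2^-\) because the action is an automorphism of a vector bundle that equals \(-\id\) on a dense open set. The bundle is locally free on the reduced stack \(\widehat{\cS}_2^-\), which is reduced since it is \'etale over the smooth stack \(\barcS_2^-\). Hence the automorphism equals \(-\id\) everywhere.

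I expect this step, together with the naturality of \(\omega_2\) under inertia automorphisms, to be the only real subtlety. Both are already built into the cited lemmas, so the proof is essentially a formal assembly.
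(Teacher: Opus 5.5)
Your proposal is correct and is essentially the paper's own proof: you reduce to $\omega_2(\barmfM_2^-)=0$ via the rank-two case of \Cref{prop:omega-stack}, then apply \Cref{lem:inertia-character} to the smoothable lift cover of \Cref{prop:lift-cover}, using the character $\chi=-1$ from \Cref{prop:character}. Your extra remark, that the character extends from the dense smooth locus because $\widehat{\cS}_2^-$ is reduced, simply makes explicit the justification the paper gives inside the proof of \Cref{prop:character}.
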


\begin{proof}
Let
\[
   \omega_2:=\omega_2(\barmfM_2^-)
   \in
   H^1\left(\barcS_2^-,
   T_{\barcS_2^-}\otimes\det E^\vee\right)
\]
be the primary obstruction class.  By \Cref{prop:lift-cover}, the finite
\'etale cover
\[
   p:\widehat{\cS}_2^-\longrightarrow\barcS_2^-
\]
carries the bosonic inertia section \(\widehat\iota\).  By
\Cref{prop:character}, this inertia section acts on
\[
   p^*\left(T_{\barcS_2^-}\otimes\det E^\vee\right)
\]
by the scalar \(-1\).  Therefore \Cref{lem:inertia-character} gives
\[
   \omega_2(\barmfM_2^-)=0.
\]
Since \(\barmfM_2^-\) has odd rank two, \Cref{prop:omega-stack} implies that
\(\barmfM_2^-\) is split.
\end{proof}

The argument is special to the odd spin component.  We next treat the even
component by a boundary computation of the primary obstruction.

\section{The even compactified genus-two component}\label{sec:g2-even}

We now prove the second assertion of \Cref{thm:main} by computing the primary
Green obstruction along the separating boundary divisor of type \((-,-)\).  The
point is to reinterpret the boundary pole found by Felder--Kazhdan--Polishchuk as
the non-zero normal principal part of the intrinsic class \(\omega_2\).

Set
\[
   \mathfrak X:=\barmfM_2^+,
   \qquad
   X:=\mathfrak X_{\bos}=\barcS_2^+,
   \qquad
   E:=T_-\mathfrak X .
\]
The primary obstruction is
\[
   \omega_2(\mathfrak X)
   \in
   H^1\left(X,\mathcal V\right),
   \qquad
   \mathcal V:=T_X\otimes\det E^\vee .
\]
Let
\[
   D:=D_{-,-}\subset X
\]
be the separating Neveu--Schwarz boundary divisor whose general point represents
an NS-nodal compact-type stable spin curve
\[
   (C_1,L_1,q_1)\cup (C_2,L_2,q_2),
\]
where \(C_1\) and \(C_2\) are smooth elliptic curves and \(L_1,L_2\) are odd
elliptic theta characteristics.  The total parity is even because
\(1+1\equiv 0\pmod 2\).

\subsection{The boundary principal part of the Green obstruction}

Put \(U:=X\setminus D\).  Felder--Kazhdan--Polishchuk prove that the canonical
projection coming from the superperiod map extends regularly to \(U\) and gives
a projection of \(\mathfrak X|_U\) onto \(U\)
\cite[Theorem~6.3(i)]{FKP-boundary}.  Denote this projection by \(\rho_U\).

Choose an \'etale chart \(V\to X\) meeting the generic point of \(D\), and complete
\(V\) along the generic point of \(D\).  We denote the resulting formal
neighbourhood by \(\widehat V\).  On \(\widehat V\) we use the standard separating
NS gluing coordinates
\[
   (\tau_1,\tau_2,t\mid \eta_1,\eta_2),
   \qquad
   D\cap \widehat V=(t=0).
\]
Here \(\tau_i\) are the elliptic moduli of the two components, \(t\) is the even
NS smoothing parameter, and \(\eta_i\) is the odd modulus of the \(i\)-th odd
one-NS-pointed elliptic component.  The formal coordinate projection
\[
   \rho_{\widehat V}^*(\tau_i)=\tau_i,
   \qquad
   \rho_{\widehat V}^*(t)=t
\]
is a regular local projection of the completed superstack
\(\mathfrak X|_{\widehat V}\) onto its bosonic formal neighbourhood.

Let \(j:U\hookrightarrow X\) be the inclusion.  The sheaf of principal parts of
\(\mathcal V\) along \(D\) is
\[
   \operatorname{PP}_D(\mathcal V):=j_*(\mathcal V|_U)/\mathcal V.
\]
Equivalently, at the generic point of \(D\), after choosing the local equation
\(t\), it records the negative powers of \(t\) in a meromorphic section of
\(\mathcal V\).  The normal principal part is obtained by projecting the
vector-field factor \(T_X\) to the normal line of \(D\).

On the punctured formal neighbourhood of the generic point of \(D\), the
difference between \(\rho_U\) and \(\rho_{\widehat V}\) is an
\(\eta_1\eta_2\)-valued vector field, hence a meromorphic section of
\(\mathcal V\).  We call its image in \(\operatorname{PP}_D(\mathcal V)\) the
boundary principal part of \(\omega_2(\mathfrak X)\) at \(D\).  The projection
\(\rho_U\) is fixed; replacing the formal coordinate projection
\(\rho_{\widehat V}\) by another regular projection on \(\widehat V\) changes the
comparison by a section regular along \(D\), and hence does not change this
principal part.

\begin{lemma}\label{lem:g2-even-principal-part}
At the generic point of \(D_{-,-}\), the image of the boundary principal part
modulo principal parts of vector fields tangent to \(D\) is
\[
   \operatorname{pp}^{\mathrm{nor}}_D\bigl(\omega_2(\mathfrak X)\bigr)
   =
   -\frac{(2\pi i)^{-2}}{2t^2}\,
   \partial_t\otimes(\eta_1\eta_2).
\]
In particular it is non-zero.
\end{lemma}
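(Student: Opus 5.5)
The plan is to compute the difference \(\rho_U^*-\rho_{\widehat V}^*\) on the even coordinates explicitly, using the boundary expansion of the superperiod map from \cite{FKP-boundary}. The projection \(\rho_U\) is defined by the superperiod matrix: \(\rho_U^*\) of a bosonic function is obtained by pulling back along the (even part of the) superperiod map, suitably symmetrised. Near \(D_{-,-}\) the superperiod matrix of the glued supercurve admits an expansion in the NS gluing coordinates \((\tau_1,\tau_2,t\mid\eta_1,\eta_2)\). Its diagonal entries are \(\tau_1,\tau_2\) up to corrections. Its off-diagonal entry is \(\tau_{12}= (2\pi i)\,c\,t+\dots\), which in the bosonic limit records the smoothing parameter. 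To this entry one adds an odd correction of the form \(\tfrac{\eta_1\eta_2}{t}\) with an explicit constant. I would quote this expansion as stated in \cite{FKP-boundary}, keeping only terms up to order \(\eta_1\eta_2\) and the singular part in \(t\).

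Next I would invert: since \(\rho_U^*(\tau_{12})\) must be a bosonic function, the coordinate \(t\) pulled back by \(\rho_U\) differs from the formal coordinate \(t\) by an \(\eta_1\eta_2\) term. Solving \(\rho_U^*(t)=t+\eta_1\eta_2\,f(t,\tau)\) from the expansion gives \(f\) with a pole, and I expect \(f=-\tfrac{(2\pi i)^{-2}}{2t^2}+(\text{regular})\). The normalisation \((2\pi i)^{-2}\) arises from the conventions \(q=e^{2\pi i\tau}\) versus \(t\), and the factor \(\tfrac12\) from the NS gluing square-root conventions. Similarly, \(\rho_U^*(\tau_i)-\tau_i\) is an \(\eta_1\eta_2\)-valued function. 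These components are tangent to \(D\), so they are discarded after projecting to the normal line. The difference of projections is then the vector field \(\eta_1\eta_2 f\,\partial_t+(\text{tangential})\). This is a meromorphic section of \(\mathcal V\), and its normal principal part is the claimed expression.

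Finally, non-vanishing and well-definedness. The paragraph before the lemma already shows that changing \(\rho_{\widehat V}\) alters the comparison by a regular section, so the principal part is intrinsic. The pole \(t^{-2}\partial_t\) is not in \(\mathcal V\) at the generic point of \(D\), so its class in \(\operatorname{PP}_D(\mathcal V)\) modulo tangential principal parts is nonzero because its coefficient is nonzero. One must also check that \(\eta_1\eta_2\) is a nonvanishing local generator of \(\det E^\vee\) near \(D\), i.e.\ that the odd moduli of the two odd elliptic components remain independent after gluing. This follows from the dimension count \(3|2\) and the gluing description.

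The main obstacle is the second step: extracting the exact singular \(\eta_1\eta_2\)-coefficient from the FKP boundary expansion and tracking constants through the change of variables between the period entry and the smoothing parameter \(t\). I would first do this on the model where both components are fixed elliptic curves and only \(t,\eta_1,\eta_2\) vary, and verify the sign and constant by comparing with the residue-pairing formula for the odd correction to the period.
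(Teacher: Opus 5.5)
Your outline matches the paper's proof: compare the canonical projection $\rho_U$ with the formal coordinate projection $\rho_{\widehat V}$, discard the $\partial_{\tau_i}$-components, and read off the principal part of the $\partial_t$-component. The paper, however, never works with the superperiod matrix. It quotes \cite[Corollary~5.10(ii)]{FKP-boundary}, which already gives the inverted expansion $\rho_U^*t=t-\frac{(2\pi i)^{-2}}{2t^2}\eta_1\eta_2+O(t^2)\eta_1\eta_2$, and the lemma follows at once. The inversion step you substitute for this citation does not work as you set it up. Suppose the bosonic off-diagonal entry is $a\,t+O(t^2)$ with $a\neq0$ and the odd correction is $b\,\eta_1\eta_2/t$. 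Writing $\rho_U^*t=t+f\,\eta_1\eta_2$ and comparing $\eta_1\eta_2$-coefficients gives $(a+O(t))f=b/t+(\text{regular})$, i.e.\ $f=\frac{b}{a}\,t^{-1}+(\text{regular})$. That is a simple pole. Your two stated inputs therefore contradict the $t^{-2}$ you expect, and no adjustment of constants changes the pole order.

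The missing ingredient is that $t$ is the NS (spin) smoothing parameter, a square root of the ordinary plumbing parameter of the underlying curve. This is the analogue of the relation $t_i=\tau_i^2$ between ordinary and spin smoothing coordinates in the proof of \Cref{prop:lift-cover}, and it is also visible in $\mathcal O(\Delta_{\NS})\simeq\Psi_1^{-1}\otimes\Psi_2^{-1}$. The classical off-diagonal period is linear in the plumbing parameter, so it equals $c\,t^2+O(t^3)$ with $c\neq0$; the bosonic period map is ramified along $D$. With your odd correction, $\rho_U^*(c\,t^2+\cdots)=c\,t^2+b\,\eta_1\eta_2/t+\cdots$ forces $(2ct+O(t^2))f=b/t+\cdots$, hence $f=\frac{b}{2c}\,t^{-2}+\cdots$. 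The ramification of $t\mapsto t^2$ is what raises the pole order from one to two and produces the factor $\tfrac12$.

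Even after this correction, your proposal only anticipates the constant $-\tfrac12(2\pi i)^{-2}$ and the absence of a $t^{-1}$ term; it does not derive them. You would need the exact leading coefficients $b,c$ and control of the subleading terms from FKP's expansion. The simpler fix is to cite Corollary~5.10(ii) for $\rho_U^*t$ directly, as the paper does. Your remaining steps agree with the paper and are fine: independence of the choice of $\rho_{\widehat V}$, discarding the tangential components, and $\eta_1\eta_2$ trivialising $\det E^\vee$.
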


\begin{proof}
For an odd-rank-two supermanifold, the primary Green cocycle is represented by
pairwise differences of local projections.  Therefore the principal part of the
difference \(\rho_U-\rho_{\widehat V}\) gives the boundary principal part of the
obstruction class.  The plumbing computation of Felder--Kazhdan--Polishchuk at
\(D_{-,-}\) gives \cite[Corollary~5.10(ii)]{FKP-boundary}
\[
   \rho_U^*t
   =
   t-\frac{(2\pi i)^{-2}}{2t^2}\eta_1\eta_2+O(t^2)\eta_1\eta_2,
\]
where the omitted coefficients are regular at the generic point of \(D\).  Hence
on the punctured formal neighbourhood
\[
   \rho_U-\rho_{\widehat V}
   =
   -\frac{(2\pi i)^{-2}}{2t^2}\,
   \partial_t\otimes(\eta_1\eta_2)
   +\text{terms regular along }D
\]
modulo vector fields tangent to \(D\).  Passing to normal principal parts gives
the asserted formula.
\end{proof}

\subsection{The non-removability of the principal part}

We need the following invariant-theoretic vanishing, which is the genus-two
hyperelliptic calculation used by Felder--Kazhdan--Polishchuk in the proof of
the uniqueness statement for the canonical projection
\cite[proof of Theorem~6.3(ii)]{FKP-boundary}.  We recall it in the form needed
below.

\begin{lemma}\label{lem:g2-even-no-vector-field}
Let \(\xi\in H^0(U,\mathcal V|_U)\) be a section which is regular at the generic
points of the non-separating boundary divisors of \(X=\barcS_2^+\).  Then
\(\xi=0\).  Consequently no such section can have the principal part
\[
   c\,t^{-2}\partial_t\otimes(\eta_1\eta_2),
   \qquad c\ne0,
\]
along \(D_{-,-}\).
\end{lemma}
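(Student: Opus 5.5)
The plan is to reduce the vanishing of \(\xi\) to a statement about Siegel-type modular forms on the open even spin locus. After that, we invoke the hyperelliptic invariant-theoretic argument of Felder--Kazhdan--Polishchuk \cite[proof of Theorem~6.3(ii)]{FKP-boundary}, which is exactly the input the paper says it uses.

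First we describe the fibres of \(\mathcal V=T_X\otimes\det E^\vee\) on the smooth locus \(\cS_2^+\subset U\). For a smooth even genus-two spin curve \((C,L)\) we have \(h^0(L)=0\). The odd cotangent space is \(H^0(C,K_C\otimes L)\), which has dimension \(2\). Its determinant together with \(T_X|_{(C,L)}=H^1(C,T_C)=H^0(C,K_C^{2})^\vee\) identifies \(\mathcal V\) with a tautological bundle on the level-\((2)\)-type cover of \(\cM_2\) by even theta characteristics. Via the period map, sections of \(\mathcal V\) over \(\cS_2^+\) become vector-valued Siegel modular forms for the theta group \(\Gamma_{1,2}\subset\mathrm{Sp}_4(\mathbb Z)\). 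Their weight is determined by \(\Sym^2\) of the standard representation, for \(T_X\), tensored with the character from \(\det H^0(K_C\otimes L)\), which is essentially weight \(\det^{1/2}\) up to a multiplier.

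Second, we translate the hypothesis. Regularity at the generic points of the non-separating boundary divisors means holomorphy at the corresponding cusps in the Satake sense. The locus \(U\) contains the separating divisors other than \(D_{-,-}\), namely \(D_{+,+}\), and there \(\xi\) is regular by assumption. Hence \(\xi\) is a holomorphic vector-valued modular form on the whole Siegel upper half space, with possibly a pole only along the reducible locus \(\{\tau_{12}=0\}\) image of \(D_{-,-}\), and only at odd--odd type points. We then multiply by a suitable power of the theta constant \(\theta[\delta]\) that vanishes exactly on that component, for \(\delta\) the corresponding odd--odd decomposition, or by the product of even theta constants \(\chi_{10}\) restricted to the relevant orbit. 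This produces a holomorphic modular form of small weight. The degree of pole it clears is bounded because the hyperelliptic description shows \(\xi\) has pole order at most \(2\) in \(t\), matching the \(t^{-2}\) principal part.

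Third, we apply the invariant-theoretic vanishing. Using the hyperelliptic model \(y^2=\prod(x-a_i)\), we write \(\xi\) as a \(\mathrm{SL}_2\)-covariant of binary sextics with the partition of the six branch points into \(3+3\) specifying \(L\). Its transformation law forces the weights to be of the wrong parity, or too small for nonzero holomorphic covariants to exist. Concretely, the hyperelliptic involution acts on the relevant fibre by a sign incompatible with invariance, or the space of covariants of that degree is zero. This gives \(\xi=0\). The consequence then follows immediately: a section with principal part \(c\,t^{-2}\partial_t\otimes\eta_1\eta_2\), \(c\neq0\), is nonzero, which contradicts \(\xi=0\).

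The main obstacle is the second step. We must check that regularity at non-separating divisors, plus whatever growth is allowed at \(D_{-,-}\), really forces a finite-dimensional space of covariants that FKP's argument shows is zero. If the pole at \(D_{-,-}\) is unrestricted, the claim as stated is false in general, since arbitrary poles allow more sections. In that case the proof will need either that \(\xi\) is regular away from \(D_{-,-}\) with bounded order there, or the plain FKP statement that a global regular section on \(U\) vanishes. I would first try the latter, reading the hypothesis as forcing holomorphy on all of the open moduli of smooth curves plus \(\Delta_0\). I would then cite FKP's computation that \(H^0\) of \(\mathcal V\) over \(\cS_2^+\cup\Delta_0\) vanishes.
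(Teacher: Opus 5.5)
Your proposal has a genuine gap. The decisive vanishing is never proved, and the route you set up to reach it relies on an input the lemma does not give you.

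The section $\xi$ is an arbitrary element of $H^0(U,\mathcal V|_U)$ with $U=X\setminus D_{-,-}$. Along $D_{-,-}$ it is therefore only meromorphic, with unknown pole order. Your bound ``pole order at most $2$ in $t$'' is the principal part of one particular comparison, $\rho_U-\rho_{\widehat V}$, taken from the Felder--Kazhdan--Polishchuk plumbing expansion. That is the section to which the lemma is later applied, not a property of sections over $U$. Without the bound, clearing denominators by powers of $\theta[\delta]$ produces holomorphic vector-valued forms whose weight grows with the power. So no ``weight too small'' argument can conclude, and the real content of the lemma is left untouched.

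Your third step does not supply that content. It names two possible mechanisms without carrying out either, and the first is false for even spin structures:
\begin{enumerate}
\item For even $L$ in genus two, $h^0(L)=h^1(L)=0$. A compatible lift $\gamma$ of the hyperelliptic involution acts by $\pm i$ at the six Weierstrass points.
\item The holomorphic Lefschetz formula for $L$ then forces three signs of each kind.
\item The same formula for $K_C\otimes L\simeq L^{3}$ gives trace zero on the two-dimensional space $H^0(C,K_C\otimes L)$, on which the square of the action is $-1$.
\item So the eigenvalues are $i,-i$ and the action on $\det E^\vee$ is trivial. The action on $H^1(C,T_C)$ is also trivial, so the involution acts trivially on $\mathcal V$ and imposes no condition.
\end{enumerate}
Your fallback, citing Felder--Kazhdan--Polishchuk for the vanishing of sections of $\mathcal V$ over $\cS_2^+\cup\Delta_0$, assumes what is to be proved; it is in fact exactly what the paper proves.

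The missing idea is the paper's explicit degree count. It also shows that your worry about unrestricted poles at $D_{-,-}$ is unfounded. On the labelled hyperelliptic cover $y^2=\prod_{i=1}^3(x-u_i)(x-v_i)$ one writes $\xi=(f_0\delta_0+f_1\delta_1+f_2\delta_2)\eta_1\eta_2$, with $(\delta_k)$ dual to $(x^k(dx/y)^2)$. The argument then runs as follows:
\begin{enumerate}
\item Regularity along the non-separating divisors is automatic, since they lie in $U$. It means there is no pole when two branch points collide, and such collisions are the only possible source of denominators. Hence the $f_k$ are polynomials, and no condition along $D_{+,+}$ or $D_{-,-}$ is ever used.
\item Under $x\mapsto\lambda x$ the terms $\delta_k\eta_1\eta_2$ have weights $1,0,-1$. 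This forces $f_0=0$, $f_1$ constant and $f_2$ linear.
\item The inversion $x\mapsto1/x$ exchanges the $\delta_0$ and $\delta_2$ components, giving $f_2=0$.
\item Interchanging the two triples fixes $\delta_1$ but acts by $-1$ on $\eta_1\eta_2$, giving $f_1=0$.
\end{enumerate}
Hence the vanishing holds for every section over $U$, whatever its pole order along $D_{-,-}$; equivalently, a projection of $\mathfrak X|_U$ is unique. The sign that kills the last coefficient comes from interchanging the triples, not from the hyperelliptic involution.
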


\begin{proof}
We work on the labelled hyperelliptic cover of the smooth even spin locus.  A
smooth genus-two curve with an even spin structure is written as
\[
   y^2=\prod_{i=1}^3(x-u_i)(x-v_i),
\]
where the two triples \((u_1,u_2,u_3)\) and \((v_1,v_2,v_3)\) determine the even
spin structure.  The relevant section must be invariant under the
\(\mathrm{SL}_2\)-action on \(x\), under permutations inside each triple, and
under interchange of the two triples.

Let
\[
   \left(\frac{dx}{y}\right)^2,\qquad
   x\left(\frac{dx}{y}\right)^2,\qquad
   x^2\left(\frac{dx}{y}\right)^2
\]
be the basis of \(H^0(C,K_C^{\otimes2})\) used in
\cite[Section~4.2]{FKP-boundary}, and let
\((\delta_0,\delta_1,\delta_2)\) be the dual tangent basis.  The odd cotangent
determinant is trivialized by \(\eta_1\eta_2\), where \((\eta_1,\eta_2)\) is the
basis dual to the two sections of \(H^0(C,K_C\otimes L)\) constructed in
\cite[(4.2)]{FKP-boundary}.  Hence, on the labelled cover, any section of
\(\mathcal V\) has the form
\[
   \xi=(f_0(u,v)\delta_0+f_1(u,v)\delta_1+f_2(u,v)\delta_2)\eta_1\eta_2 .
\]

Regularity at the generic non-separating boundary divisors means that no pole is
allowed when two branch points collide.  The only possible denominators in the
functions \(f_i\) are products of factors
\[
   (u_i-u_j),\qquad (v_i-v_j),\qquad (u_i-v_j).
\]
Thus the \(f_i\) are polynomials.

Now impose the \(\mathrm{SL}_2\)-equivariance.  Under the scaling
\[
   u_i\mapsto \lambda u_i,\qquad
   v_i\mapsto \lambda v_i,\qquad
   x\mapsto \lambda x,\qquad
   y\mapsto \lambda^3y,
\]
one has
\[
   \frac{dx}{y}\mapsto \lambda^{-2}\frac{dx}{y},
   \qquad
   \eta_1\eta_2\mapsto \lambda^{-3}\eta_1\eta_2.
\]
The dual vector fields \(\delta_0,\delta_1,\delta_2\) have weights \(4,3,2\),
respectively.  Hence the terms
\[
   \delta_0\eta_1\eta_2,\qquad
   \delta_1\eta_1\eta_2,\qquad
   \delta_2\eta_1\eta_2
\]
have weights \(1,0,-1\).  Invariance under scaling forces the polynomial
coefficients to have degrees
\[
   \deg f_0=-1,\qquad
   \deg f_1=0,\qquad
   \deg f_2=1.
\]
Thus \(f_0=0\), \(f_1\) is constant, and \(f_2\) is homogeneous of degree one.

Next use the inversion \(x\mapsto 1/x\).  This exchanges the first and third
quadratic differentials, and therefore exchanges the components
\(\delta_0\eta_1\eta_2\) and \(\delta_2\eta_1\eta_2\), up to a nowhere-zero
character factor as in \cite[proof of Theorem~6.3(ii)]{FKP-boundary}.  Since
\(f_0=0\), invariance under inversion forces \(f_2=0\).

Finally, the symmetry interchanging the two triples
\((u_1,u_2,u_3)\) and \((v_1,v_2,v_3)\) acts by \(-1\) on
\(\eta_1\eta_2\) and preserves the middle tangent component.  Hence the constant
\(f_1\) must satisfy \(f_1=-f_1\), so \(f_1=0\).  Therefore \(\xi=0\).
\end{proof}

\begin{theorem}\label{thm:g2-even-main}
The primary obstruction of the even compactified genus-two supermoduli stack is
non-zero:
\[
   \omega_2(\barmfM_2^+)\ne0.
\]
Consequently \(\barmfM_2^+\) is non-projected.
\end{theorem}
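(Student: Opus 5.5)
We argue by contradiction: assume \(\omega_2(\mathfrak X)=0\) in \(H^1(X,\mathcal V)\), and use the two preceding lemmas to show this cannot happen. Since \(\mathfrak X=\barmfM_2^+\) has odd rank two, \Cref{prop:omega-stack} says that vanishing of \(\omega_2\) is equivalent to the existence of a global projection \(\rho:\mathfrak X\to X\). The remark after \Cref{prop:omega-stack} describes the primary cocycle by pairwise differences of local projections. So \(\rho\) restricted to \(U\) and the fixed projection \(\rho_U\) of Felder--Kazhdan--Polishchuk differ by a section
\[
   \xi:=\rho|_U-\rho_U\in H^0\bigl(U,\mathcal V|_U\bigr).
\]
This holds because, in odd rank two, two projections agreeing modulo \(J\) differ exactly by an \(\eta_1\eta_2\)-valued vector field, and there are no higher corrections.

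Next we compute the principal part of \(\xi\) along \(D=D_{-,-}\). On the formal neighbourhood \(\widehat V\) we have
\[
\rho|_{\widehat V}-\rho_U=(\rho|_{\widehat V}-\rho_{\widehat V})+(\rho_{\widehat V}-\rho_U).
\]
The first bracket is regular along \(D\), since both are regular projections on \(\widehat V\). By \Cref{lem:g2-even-principal-part}, the second bracket has normal principal part
\[
+\tfrac{(2\pi i)^{-2}}{2t^2}\,\partial_t\otimes\eta_1\eta_2\neq0 .
\]
Hence \(\xi\) has nonzero normal principal part \(c\,t^{-2}\partial_t\otimes\eta_1\eta_2\) with \(c\neq0\), modulo vector fields tangent to \(D\).

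We then show that \(\xi\) is regular at the generic points of the non-separating boundary divisors of \(X\). The global projection \(\rho\) is regular everywhere. For \(\rho_U\), we invoke \cite[Theorem~6.3(i)]{FKP-boundary}: the canonical projection extends regularly over all of \(U=X\setminus D\), and in particular over the generic points of the non-separating divisors. So \(\xi\) meets the hypothesis of \Cref{lem:g2-even-no-vector-field}, which forces \(\xi=0\). This contradicts the nonzero principal part just found. Therefore \(\omega_2(\mathfrak X)\neq0\), and \Cref{prop:omega-stack} gives that \(\barmfM_2^+\) is non-projected.

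The main obstacle is the bookkeeping in the first step. One must check that a vanishing class in \(H^1(X,\mathcal V)\) really produces a projection \(\rho\) whose difference with \(\rho_U\) is a genuine section of \(\mathcal V\) over \(U\), rather than only a cocycle. One must also check that "regular at the generic points of the non-separating divisors" is correctly inherited from \(\rho_U\) and \(\rho\). If the direct identification is problematic at the stacky level, I would first pass to an étale atlas, carry out the comparison of projections there, and descend using the étale compatibility in \Cref{prop:omega-stack}. Notably, no separate argument is needed for the tangent-to-\(D\) ambiguity: \Cref{lem:g2-even-no-vector-field} kills \(\xi\) outright.
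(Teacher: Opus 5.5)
Your proposal is correct and follows the paper's proof essentially step for step. You assume \(\omega_2(\mathfrak X)=0\) to get a global projection \(\rho\), use \Cref{lem:g2-even-no-vector-field} to force \(\rho-\rho_U=0\) on \(U\), and contradict \Cref{lem:g2-even-principal-part} via the regularity of \(\rho-\rho_{\widehat V}\) along \(D\). The only differences are cosmetic: you compute the principal part before invoking the vanishing lemma, and you use the opposite sign convention for the difference of projections.
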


\begin{proof}
Suppose, to the contrary, that \(\omega_2(\mathfrak X)=0\).  Since
\(\mathfrak X\) has odd rank two, \Cref{prop:omega-stack} implies that
\(\mathfrak X\) is split; in particular, it admits a global projection
\[
   \rho:\mathfrak X\longrightarrow X.
\]
On \(U=X\setminus D\), the difference between the canonical projection
\(\rho_U\) and \(\rho\) is a section
\[
   \xi_U:=\rho_U-\rho\in H^0(U,\mathcal V|_U).
\]
This section is regular at the generic points of the non-separating boundary
divisors, because those divisors are contained in \(U\).  By
\Cref{lem:g2-even-no-vector-field}, \(\xi_U=0\).

On the formal neighbourhood \(\widehat V\) of the generic point of \(D\), the
difference \(\rho-\rho_{\widehat V}\) is regular along \(D\), since both
projections are defined on \(\widehat V\).  Therefore the normal principal part
of \(\rho_U-\rho_{\widehat V}\) along \(D\) is the same as the normal principal
part of \(\xi_U=\rho_U-\rho\).  This principal part is zero because
\(\xi_U=0\).  This contradicts \Cref{lem:g2-even-principal-part}, which gives the
non-zero normal principal part
\[
   -\frac{(2\pi i)^{-2}}{2t^2}\,\partial_t\otimes(\eta_1\eta_2).
\]
Hence \(\omega_2(\mathfrak X)\ne0\).

By \Cref{prop:omega-stack}, non-vanishing of the primary obstruction prevents
projectedness.  Therefore \(\barmfM_2^+\) is non-projected.
\end{proof}

\section{The Donagi--Witten one-pointed obstruction family}\label{sec:DW-family}

We now move to higher genera.  Before doing so, we recall the Donagi--Witten
one-pointed obstruction family \cite{DWnp}, which will be used in the proof of
the higher-genus statement.

Let \(h\ge2\).  Choose a smooth genus-\(h\) curve \(C\) and an ineffective even
theta characteristic
\[
   L^{\otimes2}\simeq\omega_C,
   \qquad
   H^0(C,L)=0.
\]
Such spin curves exist for every \(h\ge2\): a general even theta characteristic
is ineffective; see Mumford \cite{MumfordTheta} and Cornalba \cite{Cornalba}.
We use the Donagi--Witten notation
\[
   T_C^{1/2}:=L^{-1}.
\]
Since \(C\) is smooth and \(L^{\otimes2}\simeq\omega_C\), one has
\[
   T_C\simeq\omega_C^{-1}\simeq L^{-2}.
\]

The odd tangent space to the even supermoduli component at the split super
Riemann surface associated with \((C,L)\) is \(H^1(C,L^{-1})\).  For an odd
tangent vector \(\eta\in H^1(C,L^{-1})\), Donagi and Witten construct a
\((1|2)\)-dimensional supermanifold \(X_\eta\) by pulling back the universal one
Neveu--Schwarz pointed supercurve along the odd tangent line determined by
\(\eta\) \cite[Section 3.5]{DWnp}.  The reduced space of \(X_\eta\) is the curve
\(C\), regarded as the moving NS marking in the universal pointed curve over the
fixed spin curve \((C,L)\).  Thus \(X_\eta\) carries a natural morphism
\[
   u_\eta:X_\eta\longrightarrow\Msm_{h,1}^+.
\]

The odd part of the tangent bundle of \(X_\eta\) fits into an exact sequence
\[
   0\longrightarrow L^{-1}\longrightarrow T_-X_\eta
   \longrightarrow \cO_C\cdot\eta\longrightarrow0.
\]
After trivializing the odd base line by the chosen generator \(\eta\), we identify
\[
   \wedge^2T_-X_\eta\simeq L^{-1}.
\]
Moreover,
\[
   T_+X_\eta\simeq T_C\simeq L^{-2}.
\]
Hence
\[
   \cHom(\wedge^2T_-X_\eta,T_+X_\eta)
   \simeq
   \cHom(L^{-1},L^{-2})
   \simeq L^{-1}.
\]
Donagi--Witten prove that, under this identification,
\[
   \omega_2(X_\eta)=\eta\quad\text{in }H^1(C,L^{-1}),
\]
see \cite[Proposition 3.4]{DWnp}.

\section{The elliptic-tail Neveu--Schwarz boundary map}\label{sec:elliptic-tail}

Set \(g=h+1\).  Fix a smooth pointed elliptic spin curve \((E,M,q)\) with
\[
   M^{\otimes2}\simeq\omega_E\simeq\cO_E.
\]
Let \(\eps\in\{+,-\}\) be the parity of \(M\).  For compact-type nodal spin
curves, parity is the sum of the parities of the restrictions to the components.
Since the spin curve \((C,L)\) chosen above is even, the parity of the glued
curve is the parity of \(M\).  On a smooth elliptic curve, the trivial theta
characteristic is odd, whereas the three nontrivial two-torsion theta
characteristics are even.

We shall regard
\[
   \xi_{\bos}:=(E,M,q)\in \Ssm_{1,1}^{\eps}(\C)
\]
as a stack-theoretic point of the bosonic spin stack, and we denote by
\[
   \xi\in \Msm_{1,1}^{\eps}(\C)
\]
the corresponding split one-NS-pointed elliptic supercurve.  Let
\[
   A_\xi:=\Aut_{\Msm_{1,1}^{\eps}}(\xi)
\]
be its automorphism group.  Since \(\Msm_{1,1}^{\eps}\) is Deligne--Mumford,
\(A_\xi\) is finite \'etale.  Let
\[
   \mathcal G_\xi\longrightarrow \Msm_{1,1}^{\eps}
\]
be the residual gerbe of the point \(\xi\); thus
\(\mathcal G_\xi\simeq \mathrm B A_\xi\).  In what follows, saying that the
elliptic tail is fixed means that we use the object \(\xi\) as a constant family
over every test superscheme.  (It does not mean that we replace the elliptic
factor by an ordinary representable point.)

\smallskip

Consider the product of smooth one-NS-pointed supermoduli stacks
\[
   B^\eps:=\Msm_{h,1}^+\times\Msm_{1,1}^\eps.
\]
Its bosonic truncation is
\[
   B^\eps_{\bos}=\Ssm_{h,1}^+\times\Ssm_{1,1}^\eps.
\]
The stable-supercurve compactification admits the NS gluing morphism
\[
   \Gamma^\eps:B^\eps\longrightarrow\barmfM_{h+1}^\eps,
\]
which glues the two NS punctures and produces a separating NS node.  We denote
the induced morphism on bosonic truncations by
\[
   \Gamma^\eps_{\bos}:B^\eps_{\bos}\longrightarrow\barcS_{h+1}^\eps.
\]
By Felder--Kazhdan--Polishchuk, the NS gluing morphism is defined for stable
supercurves with NS punctures; near a curve with a single NS node, its image is
the corresponding NS boundary component, and it is a codimension \(1|0\)
embedding on tangent spaces \cite[Lemmas 7.9--7.10]{FKP}.  With their convention
for the NS psi-lines, the normal line to the NS boundary is
\[
   \cO(\Delta_{\NS})|_{B^\eps}
   \simeq
   \Psi_1^{-1}\otimes\Psi_2^{-1},
\]
where \(\Psi_i\) is the NS psi-line attached to the \(i\)-th puncture; see
\cite[Theorem C and equation (9.6)]{FKP}.

\smallskip

We now define carefully the fixed-tail construction as a morphism of
superstacks.  For every test superscheme \(S\) and every \(S\)-point
\(\mathcal X\in\Msm_{h,1}^+(S)\), set
\[
   \xi_S:=\xi\times_{\Spec\C} S.
\]
The assignment
\[
   \mathcal X\longmapsto (\mathcal X,\xi_S)
\]
and, on morphisms, \(\alpha\mapsto(\alpha,\id_{\xi_S})\), defines a morphism
of superstacks
\[
   \kappa_\xi:\Msm_{h,1}^+\longrightarrow
   B^\eps=\Msm_{h,1}^+\times\Msm_{1,1}^{\eps}.
\]
Equivalently, \(\kappa_\xi\) is the constant-tail morphism determined by the
chosen object \(\xi\).

The fixed-tail substack of \(B^\eps\) which retains all automorphisms of the
elliptic tail is
\[
   \Msm_{h,1}^+\times \mathcal G_\xi
   \longrightarrow
   \Msm_{h,1}^+\times\Msm_{1,1}^{\eps}.
\]
The morphism \(\kappa_\xi\) is the map determined by the chosen point
\(\Spec\C\to\mathcal G_\xi\).  We will not use representability of
\(\kappa_\xi\).  The extra automorphisms of the elliptic tail remain as inertia
in the target stack; they do not quotient the source of the test family.

Define the fixed-tail gluing morphism
\[
   G_{E,M,q}:=\Gamma^\eps\circ\kappa_\xi:
   \Msm_{h,1}^+\longrightarrow\barmfM_{h+1}^{\eps}.
\]
We define the test morphism
\[
   F_{\eta,M}:=G_{E,M,q}\circ u_\eta:
   X_\eta\longrightarrow\barmfM_{h+1}^{\eps}.
\]

On bosonic truncations, the corresponding constant-tail morphism is
\[
   \kappa_{\xi,\bos}:\Ssm_{h,1}^+
   \longrightarrow
   B^\eps_{\bos}=\Ssm_{h,1}^+\times\Ssm_{1,1}^{\eps}.
\]
Let
\[
   e_C:C\longrightarrow \Ssm_{h,1}^+,
   \qquad
   p\longmapsto (C,L,p),
\]
and define
\[
   e:=\kappa_{\xi,\bos}\circ e_C:
   C\longrightarrow B^\eps_{\bos},
   \qquad
   p\longmapsto\bigl((C,L,p),(E,M,q)\bigr).
\]
Set
\[
   i:=\Gamma^\eps_{\bos}\circ e:C\longrightarrow\barcS_{h+1}^{\eps}.
\]
Thus
\[
   i(p)=(C,L,p)\cup_{p=q}(E,M,q)
\]
is the stable spin curve obtained by gluing the moving point \(p\in C\) to the
fixed point \(q\in E\) by an NS node.  Schematically, the family is
\[
\begin{tikzpicture}[baseline=(current bounding box.center)]
\node[box, text width=3.5cm, align=center] (C) {genus \(h\) component\\ \((C,L,p)\)\\ moving NS point \(p\)};
\node[box, text width=3.5cm, align=center, right=24mm of C] (E) {elliptic tail\\ \((E,M,q)\)\\ fixed NS point \(q\)};
\node[flowaccent, text width=4.5cm, align=center, below=18mm of $(C.south)!0.5!(E.south)$] (G) {glued compact-type spin curve\\ \((C,L,p)\cup_{p=q}(E,M,q)\)};
\draw[-, thick] (C.east) -- node[tinylabel, above] {NS gluing} (E.west);
\draw[arr] (C.south east) -- ([xshift=-8mm]G.north);
\draw[arr] (E.south west) -- ([xshift=8mm]G.north);
\end{tikzpicture}
\]
All coefficient sheaves used below are pulled back along \(e\) or \(i\) to the
ordinary curve \(C\).  Thus the calculation is not a cohomology calculation over
the residual gerbe \(\mathcal G_\xi\); no invariant part under \(A_\xi\) is being
taken.

The purpose of this section is to compute the even tangent quotient along this
test curve.  Here \(T_C\) denotes the vertical tangent of the universal pointed
curve \(\Ssm_{h,1}^+\to\Ssm_h^+\); equivalently, it is the tangent direction
obtained by moving the NS marking while keeping the spin curve \((C,L)\) fixed.

\begin{lemma}\label{lem:tangent-source}
Along the morphism \(e:C\to B^\eps_{\bos}\), there is an exact sequence
\[
   0\longrightarrow T_C
   \longrightarrow e^*TB^\eps_{\bos}
   \longrightarrow W\otimes\cO_C
   \longrightarrow0,
\]
where
\[
   W:=H^1(C,T_C)\oplus T_{\xi_{\bos}}\Ssm_{1,1}^{\eps}
\]
is a fixed finite-dimensional vector space.  More explicitly,
\[
   e^*TB^\eps_{\bos}/T_C
   \simeq
   \bigl(H^1(C,T_C)\otimes\cO_C\bigr)
   \oplus
   \bigl(T_{\xi_{\bos}}\Ssm_{1,1}^{\eps}\otimes\cO_C\bigr).
\]
\end{lemma}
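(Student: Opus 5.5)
The plan is to split the tangent bundle of the product according to its two factors, and then use the universal-curve structure on the first factor. Since \(B^\eps_{\bos}=\Ssm_{h,1}^+\times\Ssm_{1,1}^\eps\) is a product of smooth Deligne--Mumford stacks, its tangent bundle is
\[
   TB^\eps_{\bos}\simeq \pr_1^*T\Ssm_{h,1}^+\oplus\pr_2^*T\Ssm_{1,1}^\eps .
\]
Pulling back along \(e=\kappa_{\xi,\bos}\circ e_C\) gives
\[
   e^*TB^\eps_{\bos}\simeq e_C^*T\Ssm_{h,1}^+\oplus e^*\pr_2^*T\Ssm_{1,1}^\eps .
\]
The composite \(\pr_2\circ e\) is the constant map \(C\to\Spec\C\to\Ssm_{1,1}^\eps\) determined by \(\xi_{\bos}\). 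Hence the second summand is canonically \(T_{\xi_{\bos}}\Ssm_{1,1}^\eps\otimes\cO_C\), a trivial bundle with fibre this fixed vector space. The residual gerbe plays no role here, because we pull back along a map from the scheme \(C\) through the chosen point.

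For the first summand, I use the forgetful morphism \(\pi:\Ssm_{h,1}^+\to\Ssm_h^+\). This is the universal curve over the smooth spin stack, so it is smooth of relative dimension one. Its relative tangent sequence is
\[
   0\to T_{\pi}\to T\Ssm_{h,1}^+\to\pi^*T\Ssm_h^+\to0 .
\]
Pull this back along \(e_C\). The composite \(\pi\circ e_C\) is constant with value \((C,L)\), so \(e_C^*\pi^*T\Ssm_h^+\) is the trivial bundle with fibre \(T_{(C,L)}\Ssm_h^+\). Moreover, \(e_C\) identifies \(C\) with the fibre of the universal curve over \((C,L)\), which gives \(e_C^*T_\pi\simeq T_C\) in the sense fixed before the lemma.

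The remaining identification is \(T_{(C,L)}\Ssm_h^+\simeq H^1(C,T_C)\). This holds because the forgetful map \(\Ssm_h^+\to\cM_h\) is étale: spin structures have no deformations beyond those of the underlying curve, since \(L\) is rigid as a square root of \(\omega_C\). It also agrees with the even tangent space recalled in the section on stable spin curves. Combining the two summands gives the stated exact sequence with \(W=H^1(C,T_C)\oplus T_{\xi_{\bos}}\Ssm_{1,1}^\eps\). The explicit quotient description follows because the cokernel is the direct sum of the two trivial pieces.

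I expect the main obstacle to be making the triviality of the pulled-back bundles precise at the level of stacks. A map from \(C\) that factors through a point of a Deligne--Mumford stack pulls the tangent complex back to the trivial bundle with the tangent space at that point as fibre, and I will handle this by pulling back along the point \(\Spec\C\to\Ssm_h^+\). I also need to check that \(C\) is the honest fibre of \(\pi\) over this point, and not its quotient by \(\Aut(C,L)\). This holds because the fibre product of \(\pi\) with \(\Spec\C\to\Ssm_h^+\) is the curve \(C\) itself.
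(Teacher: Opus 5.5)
Your proposal is correct and follows the paper's proof essentially step for step. Like the paper, you pull back the universal-curve tangent sequence of $\Ssm_{h,1}^+\to\Ssm_h^+$ along $e_C$, identify $T_{(C,L)}\Ssm_h^+\simeq H^1(C,T_C)$ via the \'etale forgetful map, and note that the constant elliptic factor contributes the trivial bundle $T_{\xi_{\bos}}\Ssm_{1,1}^\eps\otimes\cO_C$. Your extra remarks on the product splitting and on $C$ being the honest fibre of the representable universal curve only make explicit what the paper leaves implicit.
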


\begin{proof}
All assertions concern vector bundles pulled back to the ordinary curve \(C\) and
may be checked on \'etale charts.  Let
\[
   e_C:C\longrightarrow\Ssm_{h,1}^+,
   \qquad
   p\longmapsto(C,L,p).
\]
The morphism \(\Ssm_{h,1}^+\to\Ssm_h^+\) is the universal pointed curve over the
smooth spin moduli stack.  Pulling back its tangent sequence along \(e_C\) gives
\[
   0\longrightarrow T_C
   \longrightarrow e_C^*T\Ssm_{h,1}^+
   \longrightarrow T_{(C,L)}\Ssm_h^+\otimes\cO_C
   \longrightarrow0.
\]
Since the smooth spin moduli stack is finite \'etale over the ordinary moduli stack
of smooth curves,
\[
   T_{(C,L)}\Ssm_h^+\simeq H^1(C,T_C).
\]

For the elliptic factor, the morphism \(e\) is constant and equal to the
stack-theoretic point \(\xi_{\bos}\).  Therefore
\[
   e^*\pr_2^*T\Ssm_{1,1}^{\eps}
   \simeq
   T_{\xi_{\bos}}\Ssm_{1,1}^{\eps}\otimes\cO_C.
\]
If this statement is expressed through the residual gerbe
\(\mathcal G_{\xi,\bos}\), then the vector space
\(T_{\xi_{\bos}}\Ssm_{1,1}^{\eps}\) is naturally an \(A_\xi\)-representation.
Pulling it back along the chosen point \(\Spec\C\to\mathcal G_{\xi,\bos}\)
gives the underlying constant vector bundle on \(C\).  Since \(A_\xi\) is finite
\'etale, the gerbe itself contributes no infinitesimal tangent directions.

Combining the two factors in
\[
   B^\eps_{\bos}=\Ssm_{h,1}^+\times\Ssm_{1,1}^{\eps}
\]
gives the stated exact sequence and the displayed description of the quotient.
\end{proof}

\begin{lemma}\label{lem:normal-line}
The normal line to the NS boundary gluing morphism restricts along
\(e:C\to B^\eps_{\bos}\) as
\[
   e^*N_\Gamma\simeq L^{-1}\otimes M_q^{-1},
\]
where \(M_q:=M|_q\).
\end{lemma}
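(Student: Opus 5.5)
The plan is to start from the description of the normal bundle of the NS gluing morphism recalled above from \cite[Theorem C and (9.6)]{FKP}:
\[
   N_\Gamma\simeq\cO(\Delta_{\NS})|_{B^\eps}\simeq\Psi_1^{-1}\otimes\Psi_2^{-1}.
\]
It then suffices to identify the pullbacks of the two NS psi-lines along \(e\) separately and tensor them. The first puncture is the moving point on the genus-\(h\) component, and the second is the fixed point \(q\) on the elliptic tail. Since \(N_\Gamma\) lives on the bosonic truncation, the whole computation takes place on the ordinary curve \(C\), and it may be checked on étale charts, exactly as in \Cref{lem:tangent-source}.

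The second factor is straightforward. The map \(e\) is constant onto the stack-theoretic point \(\xi_{\bos}\) in the elliptic factor, so \(e^*\pr_2^*\Psi_2\) is the trivial bundle with fibre \(\Psi_2|_{\xi}\). As in the proof of \Cref{lem:tangent-source}, this fibre is an \(A_\xi\)-representation pulled back along \(\Spec\C\to\mathcal G_\xi\), hence a constant line. I then need the FKP convention for the fibre of the NS psi-line at a puncture: for an NS puncture \(q\) on a spin curve \((E,M)\), it is the fibre \(M_q\) of the spin bundle, which is a square root of \(\omega_E|_q\). This reflects the fact that the NS plumbing parameter is a square root of the ordinary one, which matches the \(\tau_i^2=t_i\) structure used in \Cref{prop:lift-cover}. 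This gives \(e^*\Psi_2\simeq M_q\otimes\cO_C\).

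For the first factor, restrict the universal NS-pointed spin curve over \(\Ssm^+_{h,1}\) along \(e_C\). The family becomes \(C\times C\to C\) with section the diagonal \(\Delta\), and the spin bundle is pulled back from the first factor. The NS psi-line is then the restriction of the spin bundle to the section, namely \(\Delta^*\pr_1^*L\simeq L\) on \(C\), so \(e^*\Psi_1\simeq L\). A consistency check: squaring gives \(L^2\simeq\omega_C\), which is the ordinary psi-class of a moving point on a fixed curve (the conormal of the diagonal). Combining, \(e^*N_\Gamma\simeq L^{-1}\otimes M_q^{-1}\).

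The main obstacle is pinning down the convention in \cite{FKP} that \(\Psi_i\) restricted to the puncture is the spin line \(L|_p\) rather than \(\omega|_p\) or a dual of it. A sign or dualisation error here would turn \(L^{-1}\) into \(L\). I would settle this using the local NS plumbing model \(z w=t\) with odd coordinates glued so that \(t\) corresponds to a section of the product of the two spin fibres, together with the squaring check against the ordinary normal bundle \(T_p C\otimes T_qE\) of the ordinary boundary.
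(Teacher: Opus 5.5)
Your proposal is correct and follows the same route as the paper: start from the FKP formula \(N_\Gamma\simeq\Psi_1^{-1}\otimes\Psi_2^{-1}\), identify each NS psi-line over an even base with the fibre of the spin line at its puncture, and pull back along \(e\), with the elliptic factor giving the constant line \(M_q^{-1}\otimes\cO_C\). The convention you single out as the main obstacle is exactly what the paper settles by citing \cite[Example 8.9]{FKP}, and your diagonal computation \(\Delta^*\pr^*L\simeq L\) spells out a step the paper states in one line.
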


\begin{proof}
Let \(N_\Gamma\) denote the normal line to the NS boundary component along the
image of the gluing morphism.  By the Felder--Kazhdan--Polishchuk normal-line
formula,
\[
   N_\Gamma\simeq\Psi_1^{-1}\otimes\Psi_2^{-1}.
\]
Over an even base, the inverse NS psi-line at a marked point is identified with
the fibre of the inverse spin line at that point \cite[Example 8.9]{FKP}.  In
the present situation the first marked point is the moving point \(p\in C\),
whereas the second marked point is the fixed point \(q\in E\).  Hence
\[
   e^*\Psi_1^{-1}\simeq L^{-1},
   \qquad
   e^*\Psi_2^{-1}\simeq M_q^{-1}\otimes\cO_C.
\]
Therefore \(e^*N_\Gamma\simeq L^{-1}\otimes M_q^{-1}\).  Stack-theoretically,
\(M_q^{-1}\) is the one-dimensional representation obtained by evaluating the
fixed spin line \(M^{-1}\) at \(q\); pulling back along the chosen point of the
residual gerbe gives the ordinary constant line
\(M_q^{-1}\otimes\cO_C\). (Thus this calculation, like the tangent calculation
above, is performed after pullback to \(C\), not after taking invariants over
the tail automorphism group.)  The individual
NS psi-line factors carry odd parity in the super tangent sequence, but their
tensor product is even; it is this even tensor product which gives the ordinary
normal line to the bosonic boundary divisor.
\end{proof}

\begin{proposition}[Even tangent quotient along the test curve]\label{prop:tangent-quotient}
Let
\[
   T:=i^*T_+\barmfM_{h+1}^{\eps}\simeq i^*T\barcS_{h+1}^{\eps}.
\]
The differential of \(i\) gives an injection \(T_C\hookrightarrow T\).  If
\(Q:=T/T_C\), then \(Q\) has a filtration
\[
   0\subset Q_{\mathrm{main}}\subset Q_{\mathrm{bd}}\subset Q
\]
whose associated graded pieces are
\[
   Q_{\mathrm{main}}
   \simeq
   H^1(C,T_C)\otimes\cO_C,
\]
\[
   Q_{\mathrm{bd}}/Q_{\mathrm{main}}
   \simeq
   T_{\xi_{\bos}}\Ssm_{1,1}^{\eps}\otimes\cO_C,
\]
\[
   Q/Q_{\mathrm{bd}}
   \simeq
   L^{-1}\otimes M_q^{-1}.
\]
Equivalently, if
\[
   W:=H^1(C,T_C)\oplus T_{\xi_{\bos}}\Ssm_{1,1}^{\eps},
\]
then \(Q\) has a two-step filtration
\[
   0\subset Q_0\subset Q
\]
with
\[
   Q_0\simeq W\otimes\cO_C,
   \qquad
   Q/Q_0\simeq L^{-1}\otimes M_q^{-1}.
\]
\end{proposition}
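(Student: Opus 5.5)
The proof factors the differential of \(i=\Gamma^\eps_{\bos}\circ e\) through \(e^*TB^\eps_{\bos}\) and uses the fact that \(\Gamma^\eps_{\bos}\) is a codimension-one embedding along its image. Pulling back the tangent–normal sequence of \(\Gamma^\eps_{\bos}\) along \(e\) gives
\[
   0\longrightarrow e^*TB^\eps_{\bos}
   \xrightarrow{\,e^*d\Gamma^\eps_{\bos}\,}
   i^*T\barcS_{h+1}^{\eps}
   \longrightarrow e^*N_\Gamma\longrightarrow0 .
\]
Here injectivity on the left is the statement of \cite[Lemmas 7.9--7.10]{FKP}, restricted to the even part and pulled back to \(C\). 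The identification \(T=i^*T_+\barmfM_{h+1}^\eps\simeq i^*T\barcS_{h+1}^\eps\) is just \(T_+\mfX=T_X\) from \Cref{sec:primary-stack}. By \Cref{lem:normal-line}, the cokernel is \(L^{-1}\otimes M_q^{-1}\).

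First, compose with the injection \(T_C\hookrightarrow e^*TB^\eps_{\bos}\) from \Cref{lem:tangent-source}. This gives the injection \(T_C\hookrightarrow T\); it is the differential of \(i\), since \(i=\Gamma^\eps_{\bos}\circ e\) and \(de\) restricted to the vertical direction is exactly this inclusion. Quotienting the displayed sequence by \(T_C\), the snake lemma (or the third isomorphism theorem) gives
\[
   0\longrightarrow e^*TB^\eps_{\bos}/T_C\longrightarrow Q\longrightarrow L^{-1}\otimes M_q^{-1}\longrightarrow0.
\]
Set \(Q_0=Q_{\mathrm{bd}}:=e^*TB^\eps_{\bos}/T_C\). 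Then \Cref{lem:tangent-source} identifies \(Q_0\simeq W\otimes\cO_C\) and gives \(Q/Q_{\mathrm{bd}}\simeq L^{-1}\otimes M_q^{-1}\).

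Next, define \(Q_{\mathrm{main}}\) to be the image of \(e_C^*T\Ssm_{h,1}^+/T_C\simeq H^1(C,T_C)\otimes\cO_C\) under the product decomposition \(B^\eps_{\bos}=\Ssm_{h,1}^+\times\Ssm_{1,1}^\eps\). Its quotient in \(Q_{\mathrm{bd}}\) is then the elliptic factor \(T_{\xi_{\bos}}\Ssm_{1,1}^\eps\otimes\cO_C\). This gives the three-step filtration, and the two-step version is obtained by merging its first two pieces.

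The main obstacle is justifying that the exact sequence holds on the whole curve \(C\) as a sequence of vector bundles, and not just pointwise. There are two issues. First, FKP's embedding statement is local near a curve with one NS node, while \(C\) maps entirely into the open stratum of curves with exactly one NS node. Second, the gluing morphism might fail to be unramified because of tail automorphisms. I would handle this by working on an étale chart of \(\barcS_{h+1}^\eps\) near each point \(i(p)\). The residual-gerbe remark after \Cref{lem:tangent-source} shows that \(A_\xi\) contributes no infinitesimal directions, so \(d\Gamma^\eps_{\bos}\) is fibrewise injective with cokernel of rank one. A fibrewise injective map of vector bundles has locally free cokernel, and that cokernel is identified with \(e^*N_\Gamma\) via \Cref{lem:normal-line}.
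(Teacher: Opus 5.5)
Your proposal is correct and follows the paper's proof essentially step for step. You pull back the tangent--normal sequence of the NS gluing map along $e$ and quotient by $T_C$ to get $0\to Q_{\mathrm{bd}}\to Q\to e^*N_\Gamma\to 0$ with $e^*N_\Gamma\simeq L^{-1}\otimes M_q^{-1}$, and you then split $Q_{\mathrm{bd}}$ using the product decomposition of $B^\eps_{\bos}$. Your extra paragraph on the vector-bundle issue goes beyond what the paper spells out, since the paper simply invokes the tangent-embedding property from FKP. Note, though, that fibrewise injectivity of $d\Gamma^\eps_{\bos}$ comes from that FKP lemma, not from the residual-gerbe remark.
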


\begin{proof}
By Lemma \ref{lem:tangent-source}, the quotient of \(e^*TB^\eps_{\bos}\) by the
moving tangent direction \(T_C\) is
\[
   e^*TB^\eps_{\bos}/T_C
   \simeq
   \bigl(H^1(C,T_C)\otimes\cO_C\bigr)
   \oplus
   \bigl(T_{\xi_{\bos}}\Ssm_{1,1}^{\eps}\otimes\cO_C\bigr).
\]
By the tangent-embedding property of the NS boundary gluing morphism, pulling
back the tangent sequence of the boundary inclusion along \(e\) gives
\[
   0\longrightarrow e^*TB^\eps_{\bos}
   \longrightarrow T
   \longrightarrow e^*N_\Gamma
   \longrightarrow0.
\]
Together with the quotient sequence defining \(Q=T/T_C\), these exact sequences
fit into the following commutative diagram:
\[
\begin{tikzcd}[column sep=large,row sep=large]
  & 0 \arrow[d] & 0 \arrow[d] & 0 \arrow[d] & \\
0 \arrow[r] &
T_C \arrow[r] \arrow[d,equal] &
e^*TB^\eps_{\bos} \arrow[r] \arrow[d,hook] &
e^*TB^\eps_{\bos}/T_C \arrow[r] \arrow[d,hook] &
0 \\
0 \arrow[r] &
T_C \arrow[r] \arrow[d] &
T \arrow[r] \arrow[d] &
Q \arrow[r] \arrow[d] &
0 \\
0 \arrow[r] &
0 \arrow[r] &
e^*N_\Gamma \arrow[r,equal] \arrow[d] &
e^*N_\Gamma \arrow[r] \arrow[d] &
0 \\
& & 0 & 0 &
\end{tikzcd}
\]
Let \(Q_{\mathrm{bd}}\subset Q\) be the image of
\(e^*TB^\eps_{\bos}/T_C\).  The diagram gives an exact sequence
\[
   0\longrightarrow Q_{\mathrm{bd}}
   \longrightarrow Q
   \longrightarrow e^*N_\Gamma
   \longrightarrow0.
\]
By Lemma \ref{lem:normal-line},
\[
   e^*N_\Gamma\simeq L^{-1}\otimes M_q^{-1}.
\]

It remains to record the internal structure of \(Q_{\mathrm{bd}}\).  Under the
direct-sum identification above, define \(Q_{\mathrm{main}}\subset Q_{\mathrm{bd}}\)
as the image of the summand
\[
   H^1(C,T_C)\otimes\cO_C
   \subset e^*TB^\eps_{\bos}/T_C.
\]
Then
\[
   Q_{\mathrm{main}}
   \simeq
   H^1(C,T_C)\otimes\cO_C,
\]
and
\[
   Q_{\mathrm{bd}}/Q_{\mathrm{main}}
   \simeq
   T_{\xi_{\bos}}\Ssm_{1,1}^{\eps}\otimes\cO_C.
\]
Thus \(Q\) has the asserted three-step filtration.  Grouping the two constant
summands gives the equivalent two-step filtration
\[
   0\subset Q_0\subset Q,
   \qquad
   Q_0\simeq W\otimes\cO_C,
   \qquad
   Q/Q_0\simeq L^{-1}\otimes M_q^{-1},
\]
where
\(
   W:=H^1(C,T_C)\oplus T_{\xi_{\bos}}\Ssm_{1,1}^{\eps}.
\)
\end{proof}

\section{The survival of the obstruction under elliptic-tail gluing}\label{sec:survival}

Let
\[
   J:H^1(C,L^{-1})\longrightarrow H^1\bigl(C,\cHom(L^{-1},T)\bigr)
\]
be the map induced by the inclusion \(T_C\simeq L^{-2}\hookrightarrow T\).  Since
\(\cHom(L^{-1},T)=T\otimes L\), this is equivalently the map induced by
\[
   \cHom(L^{-1},T_C)\simeq L^{-1}
   \longrightarrow
   \cHom(L^{-1},T)\simeq T\otimes L.
\]

\begin{lemma} \label{lem:kernel}
With the notation above,
$
   \dim\ker J\le1.
$
\end{lemma}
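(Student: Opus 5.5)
The plan is to read off \(\ker J\) from the long exact cohomology sequence of the inclusion \(T_C\hookrightarrow T\), twisted by \(L\). Tensoring \(0\to T_C\to T\to Q\to0\) with \(L\) gives
\[
   0\longrightarrow L^{-1}\longrightarrow T\otimes L\longrightarrow Q\otimes L\longrightarrow0 .
\]
The connecting map then yields
\[
   H^0(C,T\otimes L)\longrightarrow H^0(C,Q\otimes L)\xrightarrow{\ \delta\ }H^1(C,L^{-1})\xrightarrow{\ J\ }H^1(C,T\otimes L).
\]
Hence \(\ker J=\operatorname{im}\delta\), so \(\dim\ker J\le h^0(C,Q\otimes L)\). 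It therefore suffices to show \(h^0(C,Q\otimes L)\le1\), or at least that the image of \(\delta\) is at most one-dimensional.

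Next I would use the filtration of \Cref{prop:tangent-quotient}, namely \(0\subset Q_0\subset Q\) with \(Q_0\simeq W\otimes\cO_C\) and \(Q/Q_0\simeq L^{-1}\otimes M_q^{-1}\). Twisting by \(L\) gives
\[
   0\to W\otimes L\to Q\otimes L\to M_q^{-1}\otimes\cO_C\to0 .
\]
Since \(L\) is an ineffective even theta characteristic, \(H^0(C,L)=0\), and so \(H^0(C,W\otimes L)=W\otimes H^0(C,L)=0\). Taking global sections then gives an injection \(H^0(C,Q\otimes L)\hookrightarrow H^0(C,M_q^{-1}\otimes\cO_C)\simeq\C\). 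Consequently \(h^0(C,Q\otimes L)\le1\), and hence \(\dim\ker J\le1\).

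The main point to check is that the quotient piece really is \(L^{-1}\otimes M_q^{-1}\) with \(M_q^{-1}\) a constant line on \(C\). This is supplied by \Cref{lem:normal-line}, so twisting by \(L\) gives a trivial line bundle with exactly one section. The other point is to make sure that the identifications of \(\cHom(L^{-1},T_C)\) with \(L^{-1}\) and of \(\cHom(L^{-1},T)\) with \(T\otimes L\) are compatible with the twisted inclusion used above. This is formal.

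The only substantive input is the vanishing \(H^0(C,L)=0\), which kills the contribution of the constant part \(W\otimes\cO_C\). This is exactly where the choice of an ineffective even theta characteristic in \Cref{sec:DW-family} enters.
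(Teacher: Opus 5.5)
Your proposal is correct and matches the paper's proof essentially step for step. Like the paper, you twist $0\to T_C\to T\to Q\to0$ by $L$ and identify $\ker J$ with the image of the connecting map $\delta\colon H^0(C,Q\otimes L)\to H^1(C,L^{-1})$. You then bound $h^0(C,Q\otimes L)\le 1$ using the two-step filtration of \Cref{prop:tangent-quotient} together with $H^0(C,W\otimes L)=W\otimes H^0(C,L)=0$.
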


\begin{proof}
Apply \(\cHom(L^{-1},-)=(-)\otimes L\) to the exact sequence
\[
   0\longrightarrow T_C\longrightarrow T\longrightarrow Q\longrightarrow0.
\]
Using \(T_C\simeq L^{-2}\), we obtain
\[
   0\longrightarrow L^{-1}
   \longrightarrow T\otimes L
   \longrightarrow Q\otimes L
   \longrightarrow0.
\]
The associated long exact sequence in cohomology contains
\[
   H^0(C,Q\otimes L)
   \xrightarrow{\delta}
   H^1(C,L^{-1})
   \xrightarrow{J}
   H^1(C,T\otimes L).
\]
Hence \(\ker J=\operatorname{im}\delta\), and therefore
\[
   \dim\ker J\le h^0(C,Q\otimes L).
\]
By Proposition \ref{prop:tangent-quotient}, after tensoring by \(L\) the quotient
\(Q\otimes L\) has a filtration whose associated graded terms are
\[
   Q_0\otimes L\simeq W\otimes L,
   \qquad
   (Q/Q_0)\otimes L\simeq \cO_C\otimes M_q^{-1}.
\]
Equivalently, there is an exact sequence
\[
   0\longrightarrow W\otimes L
   \longrightarrow Q\otimes L
   \longrightarrow \cO_C\otimes M_q^{-1}
   \longrightarrow0.
\]
Taking global sections gives an injection
\[
   H^0(C,Q\otimes L)
   \hookrightarrow
   H^0(C,\cO_C)\otimes M_q^{-1}
\]
because \(H^0(C,W\otimes L)=W\otimes H^0(C,L)=0\).  Hence
\[
   h^0(C,Q\otimes L)\le h^0(C,\cO_C)=1.
\]
Therefore \(\dim\ker J\le1\).
\end{proof}

\begin{corollary}\label{cor:eta-survives}
There exists \(\eta\in H^1(C,L^{-1})\) such that \(J(\eta)\ne0\).
\end{corollary}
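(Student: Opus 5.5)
The corollary follows from \Cref{lem:kernel} by a dimension count: it suffices to show that \(\dim H^1(C,L^{-1})\ge2\). Then the kernel of the linear map \(J\), which has dimension at most one, is a proper subspace, and any \(\eta\) outside it satisfies \(J(\eta)\ne0\).

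To compute \(\dim H^1(C,L^{-1})\), I will use Serre duality together with \(L^{\otimes2}\simeq\omega_C\). This gives
\[
   H^1(C,L^{-1})^\vee\simeq H^0(C,\omega_C\otimes L)\simeq H^0(C,L^{3}).
\]
Since \(\deg L^{3}=3(h-1)>2h-2\) for \(h\ge2\), the group \(H^1(C,L^{3})\) vanishes. Riemann--Roch then gives
\[
   h^0(C,L^{3})=3(h-1)-h+1=2h-2.
\]
Hence \(\dim H^1(C,L^{-1})=2h-2\), which is at least \(2\) because \(h\ge2\).

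The argument is linear algebra: a subspace of dimension at most \(1\) in a space of dimension at least \(2\) is proper, so any \(\eta\in H^1(C,L^{-1})\setminus\ker J\) works. The one point to check is the degree bound that kills \(H^1(C,L^3)\), namely \(3h-3>2h-2\). This is exactly where the hypothesis \(h\ge2\) is used, and it holds.
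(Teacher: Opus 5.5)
Your proof is correct and follows the paper's argument: combine the bound \(\dim\ker J\le 1\) from the kernel lemma with \(h^1(C,L^{-1})=2h-2\ge 2\). The only cosmetic difference is that the paper computes \(h^1(C,L^{-1})\) by applying Riemann--Roch directly to \(L^{-1}\), using \(H^0(C,L^{-1})=0\) since \(\deg L^{-1}<0\), whereas you pass through Serre duality to \(H^0(C,L^{3})\) first.
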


\begin{proof}
Since \(L^{\otimes2}\simeq\omega_C\), we have
\[
   \deg L=h-1,
   \qquad
   \deg L^{-1}=1-h<0.
\]
Thus \(H^0(C,L^{-1})=0\).  By Riemann--Roch,
\[
   \chi(C,L^{-1})=h^0(C,L^{-1})-h^1(C,L^{-1})
   =\deg(L^{-1})+1-h=2-2h.
\]
Since \(H^0(C,L^{-1})=0\), this gives
\[
   h^1(C,L^{-1})=2h-2.
\]
For \(h\ge2\), the source of \(J\) has dimension at least \(2\).  By Lemma
\ref{lem:kernel}, \(\ker J\) has dimension at most \(1\), and hence it cannot be all of
\(H^1(C,L^{-1})\).  Choose \(\eta\notin\ker J\).
\end{proof}

\section{Proof of the main theorem}\label{sec:proof-main}

We now assemble the two arguments established in the preceding sections.  The
genus-two assertions were already proved in
\Cref{sec:g2-compactified} and \Cref{sec:g2-even}. The higher-genus assertion follows from the
Donagi--Witten test family, the elliptic-tail gluing construction, and the
functoriality of the primary obstruction.

\begin{proof}[Proof of \Cref{thm:main}]
The first assertion is precisely \Cref{thm:g2-main}: the odd compactified
genus-two component \(\barmfM_2^-\) is split.  The second assertion is the
Felder--Kazhdan--Polishchuk genus-two even non-projectedness theorem, recalled
in the form needed here in \Cref{sec:g2-even}.  In the notation of this paper,
this is \Cref{thm:g2-even-main}: the primary obstruction of
\(\barmfM_2^+\) is non-zero, and hence \(\barmfM_2^+\) is non-projected.

\smallskip

We now prove the higher-genus assertion.  Fix
\(\eps\in\{+,-\}\).  Let \(g\ge 3\), and put
\[
        h:=g-1\ge 2.
\]
Choose a smooth curve \(C\) of genus \(h\), together with an ineffective even
theta characteristic
\[
        L^{\otimes 2}\simeq \omega_C,
        \qquad
        H^0(C,L)=0.
\]
Choose a pointed elliptic spin curve \((E,M,q)\) of parity \(\eps\).  Such a
choice exists for both parities: the trivial theta characteristic on \(E\) is
odd, whereas the three nontrivial two-torsion theta characteristics are even.

For this choice of elliptic tail, the construction of
\Cref{sec:elliptic-tail} gives the map
\[
        J:
        H^1(C,L^{-1})
        \longrightarrow
        H^1\bigl(C,\cHom(L^{-1},T)\bigr).
\]
By \Cref{cor:eta-survives}, there exists a class
\[
        \eta\in H^1(C,L^{-1})
\]
such that
\[
        J(\eta)\ne 0.
\]

Let \(X_\eta\) be the Donagi--Witten test supermanifold associated with
\(\eta\).  By the construction recalled in \Cref{sec:DW-family}, its primary
obstruction is
\[
        \omega_2(X_\eta)=\eta
        \in  H^1(C,L^{-1}),
\]
under the identifications
\[
        T_+X_\eta\simeq T_C\simeq L^{-2},
        \qquad
        \bigwedge\nolimits^2T_-X_\eta\simeq L^{-1}.
\]
The constant-tail NS gluing construction of \Cref{sec:elliptic-tail} gives a
morphism of superstacks
\[
        G_{E,M,q}:\Msm_{h,1}^+\longrightarrow \barmfM_g^\eps .
\]
Composing it with the Donagi--Witten morphism
\[
        u_\eta:X_\eta\longrightarrow \Msm_{h,1}^+
\]
gives the test morphism
\[
        F_{\eta,M}:=G_{E,M,q}\circ u_\eta:
        X_\eta\longrightarrow \barmfM_g^\eps .
\]
Thus we have the commutative diagram
\[
\begin{tikzcd}[column sep=large,row sep=large]
        X_\eta
        \arrow[r,"u_\eta"]
        \arrow[dr,"F_{\eta,M}"']
        &
        \Msm_{h,1}^+
        \arrow[d,"G_{E,M,q}"]
        \\
        &
        \barmfM_g^\eps .
\end{tikzcd}
\]

On bosonic truncations, the morphism \(F_{\eta,M}\) restricts to the boundary
test curve
\[
        i:C\longrightarrow \barcS_g^\eps,
        \qquad
        p\longmapsto (C,L,p)\cup_{p=q}(E,M,q).
\]
Equivalently, if
\[
        e:C\longrightarrow \Ssm_{h,1}^+\times \Ssm_{1,1}^\eps,
        \qquad
        p\longmapsto \bigl((C,L,p),(E,M,q)\bigr),
\]
then $
        i=\Gamma_{\bos}^{\eps}\circ e.$
This is expressed by the diagram
\[
\begin{tikzcd}[column sep=large,row sep=large]
        C
        \arrow[r,"e"]
        \arrow[dr,"i"']
        &
        \Ssm_{h,1}^+\times \Ssm_{1,1}^\eps
        \arrow[d,"\Gamma_{\bos}^{\eps}"]
        \\
        &
        \barcS_g^\eps .
\end{tikzcd}
\]

Set
$
        T:=i^*T_+\barmfM_g^\eps .
$
For the morphism \(F_{\eta,M}\), the comparison sheaf in
\Cref{lem:functoriality} is
\[
        G_{F_{\eta,M}}
        :=
        \cHom\!\left(
        \bigwedge\nolimits^2T_-X_\eta,
        i^*T_+\barmfM_g^\eps
        \right).
\]
Using
\[
        \bigwedge\nolimits^2T_-X_\eta\simeq L^{-1},
        \qquad
        i^*T_+\barmfM_g^\eps=T,
\]
we identify it as
\[
        G_{F_{\eta,M}}\simeq \cHom(L^{-1},T).
\]
The coefficient morphism
\[
        j_{F_{\eta,M}}:
        H^1\!\left(
        C,
        \cHom\!\left(
        \bigwedge\nolimits^2T_-X_\eta,T_+X_\eta
        \right)
        \right)
        \longrightarrow
        H^1(C,G_{F_{\eta,M}})
\]
is induced by the even differential
\[
        dF_{\eta,M,+}:T_+X_\eta=T_C
        \longrightarrow
        i^*T_+\barmfM_g^\eps=T .
\]
Under the identifications above, this is precisely the map induced by the
inclusion \(T_C\hookrightarrow T\).  At the level of coefficient sheaves, the
comparison is represented by
\[
\begin{tikzcd}[column sep=huge,row sep=large]
        \cHom\!\left(
        \bigwedge\nolimits^2T_-X_\eta,T_+X_\eta
        \right)
        \arrow[r]
        \arrow[d,"\simeq"']
        &
        \cHom\!\left(
        \bigwedge\nolimits^2T_-X_\eta,i^*T_+\barmfM_g^\eps
        \right)
        \arrow[d,"\simeq"]
        \\
        \cHom(L^{-1},T_C)
        \arrow[r]
        \arrow[d,"\simeq"']
        &
        \cHom(L^{-1},T)
        \\
        L^{-1}
        \arrow[ur,bend right=18,hookrightarrow,swap,"T_C\hookrightarrow T"]
        &
\end{tikzcd}
\]
where the curved arrow is the morphism obtained from the inclusion
\(T_C\hookrightarrow T\).  Passing to cohomology, this is exactly the map
\[
        J:
        H^1(C,L^{-1})
        \longrightarrow
        H^1\bigl(C,\cHom(L^{-1},T)\bigr)
\]
introduced in \Cref{sec:survival}.  Therefore
\[
        j_{F_{\eta,M}}\bigl(\omega_2(X_\eta)\bigr)=J(\eta).
\]
By construction of \(\eta\), this class is non-zero:
\[
        j_{F_{\eta,M}}\bigl(\omega_2(X_\eta)\bigr)=J(\eta)\ne0.
\]

We now apply \Cref{lem:functoriality} to
\[
        F_{\eta,M}:X_\eta\longrightarrow \barmfM_g^\eps.
\]
The lemma does not provide a morphism between the two obstruction groups
themselves.  Rather, it compares their images in the common cohomology group
\(
        H^1(C,G_{F_{\eta,M}}).
\)
Thus the relevant diagram is the cospan
\[
\begin{tikzcd}[column sep=huge,row sep=large]
        H^1\!\left(
        C,
        \cHom\!\left(
        \bigwedge\nolimits^2T_-X_\eta,T_+X_\eta
        \right)
        \right)
        \arrow[rr,"j_{F_{\eta,M}}"]
        &&
        H^1(C,G_{F_{\eta,M}})
        \\
        &&
        \\
        H^1\!\left(
        \barcS_g^\eps,
        \cHom\!\left(
        \bigwedge\nolimits^2T_-\barmfM_g^\eps,
        T_+\barmfM_g^\eps
        \right)
        \right)
        \arrow[uurr,bend right=18,"\iota_{F_{\eta,M}}"']
        &&
\end{tikzcd}
\]
and functoriality asserts the equality
\[
        j_{F_{\eta,M}}\bigl(\omega_2(X_\eta)\bigr)
        =
        \iota_{F_{\eta,M}}\bigl(\omega_2(\barmfM_g^\eps)\bigr)
        \quad\text{in }H^1(C,G_{F_{\eta,M}}).
\]
Since the left-hand side is \(J(\eta)\ne0\), we obtain
\[
        \iota_{F_{\eta,M}}\bigl(\omega_2(\barmfM_g^\eps)\bigr)
        =
        J(\eta)\ne0.
\]
In particular,
\[
        \omega_2(\barmfM_g^\eps)\ne0.
\]
By \Cref{prop:omega-stack}, non-vanishing of the primary obstruction prevents
projectedness.  Hence \(\barmfM_g^\eps\) is non-projected.

It remains only to obtain both parity components.  Elliptic curves admit theta
characteristics of both parities.  Since the genus-\(h\) spin curve \((C,L)\) is
even, the parity of the compact-type glued spin curve is the parity of the
elliptic tail:
\[
        \operatorname{par}\bigl((C,L,p)\cup_{p=q}(E,M,q)\bigr)
        =
        \operatorname{par}(L)+\operatorname{par}(M)
        =
        \operatorname{par}(M).
\]
Choosing \(M\) even gives the even compactified component, and choosing \(M\)
odd gives the odd compactified component.  Therefore both
\(\barmfM_g^+\) and \(\barmfM_g^-\) are non-projected for every \(g\ge3\).\end{proof}


\begin{thebibliography}{99}

\bibitem{AtiyahSpin}
M.~Atiyah,
\emph{Riemann surfaces and spin structures},
Ann. Sci. \'Ecole Norm. Sup. (4) \textbf{4} (1971), 47--62.

\bibitem{Bruzzo}
U.~Bruzzo, D.~Hern\'andez Ruip\'erez, 
\emph{Foundations of superstack theory}, 
arXiv:2505.19899.


\bibitem{CCC04}
L.~Caporaso, C.~Casagrande and M.~Cornalba,
\emph{Moduli of roots of line bundles on curves},
Trans. Amer. Math. Soc. \textbf{359} (2007), no.~8, 3733--3768.

\bibitem{CodViv} 
G.~Codogni and F.~Viviani, 
\emph{Moduli and periods of supersymmetric curves}, Adv. Theor. Math. Phys. {\bf 23} (2019), no.~2, 345--402.

\bibitem{Cornalba}
M.~Cornalba,
\emph{Moduli of curves and theta-characteristics},
in \emph{Lectures on Riemann Surfaces}, Trieste, 1987,
World Scientific, 1989, 560--589.


\bibitem{CorNoj}
M.~Corr\^{e}a, S.~Noja,
\emph{Formal moduli and the splitting theory of supermanifolds},
arXiv:2605.03166.


\bibitem{DWnp}
R.~Donagi and E.~Witten,
\emph{Supermoduli space is not projected},
in \emph{String-Math 2012}, Proc. Sympos. Pure Math. \textbf{90},
Amer. Math. Soc., Providence, RI, 2015, 19--71.

\bibitem{DWat}
R.~Donagi and E.~Witten,
\emph{Super Atiyah classes and obstructions to splitting of supermoduli space},
Pure Appl. Math. Q. \textbf{9} (2013), no.~4, 739--788.

\bibitem{FKP}
G. Felder, D.~A. Kazhdan and A. Polishchuk, \emph{The moduli space of stable supercurves and its canonical line bundle}, 
Amer. J. Math. {\bf 145} (2023), no.~6, 1777--1886.

\bibitem{FKP-boundary}
G. Felder, D. Kazhdan and A. Polishchuk,
\emph{Superperiods and superstring measure near the boundary of the moduli space of supercurves},
arXiv:2408.11136.


\bibitem{Green}
P.~Green,
\emph{On holomorphic graded manifolds},
Proc. Amer. Math. Soc. \textbf{85} (1982), no.~4, 587--590.


\bibitem{MumfordTheta}
D.~Mumford,
\emph{Theta characteristics of an algebraic curve},
Ann. Sci. \'Ecole Norm. Sup. (4) \textbf{4} (1971), 181--192.

\bibitem{WittenNotes}
E. Witten, \emph{Notes on super Riemann surfaces and their moduli}, 
Pure Appl. Math. Q. {\bf 15} (2019), no.~1, 57--211. 

\end{thebibliography}
\end{document}